\documentclass[a4paper,11pt]{article}
\usepackage[multidot]{grffile}
\usepackage{latexsym,amssymb,enumerate,amsmath,epsfig,amsthm}
\usepackage[margin=1in]{geometry}
\usepackage{setspace,color}
\usepackage{graphics}
\usepackage{graphicx}
\usepackage[ruled]{algorithm2e}
\usepackage{empheq}
\usepackage{bm,amsmath}
\usepackage{subcaption}
\usepackage{hyperref} 
\hypersetup{
    colorlinks=true,
    linkcolor=blue,
    filecolor=magenta,      
    urlcolor=cyan,
    citecolor=red,
    pdfpagemode=FullScreen,
    }

\usepackage{rotating,booktabs,array,amsmath,amssymb}
\newcolumntype{C}{>{$\displaystyle}c<{$}}

\usepackage{multirow}
\newcommand\fixup{\kern-\fontcharic\scriptfont2`\"}

\usepackage{lineno}

\newcommand{\bS}{\mathbb{S}}
\newcommand{\bR}{\mathbb{R}}
\newcommand{\Log}{\operatorname{Log}}
\newcommand{\Exp}{\operatorname{Exp}}
\newcommand{\SLERP}{\operatorname{SLERP}}

\newtheorem{thm}{Theorem}
\newtheorem{prop}{Proposition}
\newtheorem{lemma}{Lemma}
\newtheorem{remark}{Remark}
\newtheorem{assump}{Assumption}
\numberwithin{equation}{section}

\title{Local Consistency and Higher-Order Structure of Spherical Interpolation}
\author{
Shingyu Leung\thanks{Department of Mathematics, the Hong Kong University of Science and Technology, Clear Water Bay, Hong Kong. Email: {\bf masyleung@ust.hk}}
}
\date{}

\begin{document}
\thispagestyle{plain}
\maketitle

\begin{abstract}
Spherical Interpolation of orDER \(n\) (SIDER-\(n\)) is a recursive
high-order interpolation construction for data on the unit sphere \(\bS^2\),
built from repeated spherical linear interpolation (SLERP).  This paper gives
a local consistency analysis of SIDER for smooth spherical curves sampled at
equally spaced parameter values.  The analysis is carried out in geodesic
normal coordinates, which allows the SIDER recursion to be compared with
classical Neville interpolation while retaining the curvature-dependent
corrections introduced by SLERP.  We first derive local expansions of SLERP and
show that SIDER2 has third-order accuracy; its leading error has the same
shifted nodal structure as Euclidean quadratic interpolation.  We then prove
that the adjacent SIDER2 errors entering SIDER3 have a common leading
coefficient, so that the SIDER3 recurrence cancels the cubic term and yields
fourth-order accuracy.  Carrying the expansion one order further gives the
corresponding coefficient compatibility for SIDER3 and proves fifth-order
accuracy of SIDER4.  Finally, we introduce a degree-filtered formal expansion
framework for the general SIDER recursion.  This framework proves that, for
each fixed \(n\), SIDER-\(n\) preserves the required polynomial degree
structure in the normalized stencil variable.  Together with the interpolation
conditions at the \(n+1\) nodes, this yields the local consistency estimate
$d_{\bS^2}\bigl(\gamma(\theta h),P_i^{[n]}(\theta;h)\bigr)
        =O(h^{n+1})$ under the stated smoothness and small-stencil assumptions.
\end{abstract}

\noindent\textbf{Keywords.} Spherical interpolation; SLERP; SIDER; Taylor expansion;
normal coordinates; Neville interpolation; unit sphere.

\section{Introduction}

Interpolation of data constrained to a nonlinear manifold is a fundamental
problem in scientific computing.  In many applications, the quantities to be
interpolated are not arbitrary Euclidean vectors, but normalized directions,
orientations, rotations, or other geometric states satisfying nonlinear
constraints.  A particularly important example is interpolation on the unit
sphere \(\bS^2\).  If spherical data are interpolated by standard Euclidean
polynomials, the resulting curve generally leaves the sphere; projecting the
curve back to \(\bS^2\) restores the constraint but may change both the
geometry and the approximation properties of the reconstruction.  This
motivates interpolation methods that respect the spherical constraint
throughout the construction.

Spherical interpolation appears in computer graphics, rigid-body kinematics,
attitude dynamics, molecular simulation, fluid mechanics, and numerical methods
involving quaternion- or direction-valued unknowns.  Quaternions provide a
convenient representation of rotations and have been widely used in orientation
interpolation and rotation sequences; see, for example,
\cite{ham63,kui02,muk02,zha97}.  The classical spherical linear interpolation
method, or SLERP, joins two points on \(\bS^2\) by a geodesic with constant
angular speed \cite{sho85}.  SLERP is geometrically natural, preserves the
unit-length constraint exactly, and is easy to implement.  However, piecewise
SLERP has limited smoothness at interpolation nodes, and therefore does not
provide a systematic high-order interpolation framework.

In \cite{fonleu23}, we introduced a recursive family of spherical
interpolants called SIDER, standing for Spherical Interpolation of orDER
\(n\).  The construction is inspired by Bezier and Neville-type interpolation,
but replaces affine interpolation by SLERP.  SIDER2 interpolates three
spherical data points and plays the role of a quadratic reconstruction; SIDER3
combines two adjacent SIDER2 interpolants and is analogous to a cubic
reconstruction.  More generally, SIDER-\(n\) is obtained recursively from two
adjacent SIDER-\((n-1)\) interpolants.  This construction provides a natural
high-order interpolation framework on \(\bS^2\), and it also serves as the
building block of the spherical essentially non-oscillatory method, SENO,
introduced in \cite{fonleu23}.  Numerical experiments in that work show
the expected high-order behavior of SIDER and improved non-oscillatory
behavior of SENO near nonsmooth data.

The purpose of the present paper is to provide a local consistency analysis of
the SIDER construction for smooth curves on \(\bS^2\).  The analysis is
carried out in geodesic normal coordinates centered at the exact value of the
curve.  In these coordinates, the SIDER recursion can be compared with
classical Neville interpolation, while the curvature-dependent corrections
introduced by SLERP remain visible.  This point is important: although SLERP is
locally affine to leading order, its first nonlinear correction appears at
cubic order.  Since this is the same order as the leading error of a quadratic
reconstruction, a rigorous proof of fourth-order accuracy for SIDER3 cannot
treat the nonlinear part of SLERP as an irrelevant remainder.

We first derive local normal-coordinate expansions of SLERP and use them to
prove a refined third-order error formula for SIDER2.  The leading error has
the same shifted nodal structure as ordinary quadratic interpolation.  We then
show that the two adjacent SIDER2 reconstructions entering SIDER3 have a common
leading cubic coefficient, so that the SIDER3 recurrence cancels this term and
yields fourth-order local accuracy.  By carrying the expansion one order
further, we obtain the corresponding coefficient compatibility for SIDER3 and
prove fifth-order local accuracy of SIDER4.  Finally, we introduce a
degree-filtered formal expansion framework for the general SIDER recursion.
This framework proves that, for every fixed \(n\), SIDER-\(n\) achieves the
local consistency estimate \(O(h^{n+1})\) under the stated smoothness and
small-stencil assumptions.

The results are local and asymptotic.  We assume that the sampled points and
the intermediate values generated by the interpolation process remain in a
common geodesically convex neighborhood of \(\bS^2\).  This ensures that the
logarithm map is single-valued, the relevant geodesics are unique, and the
Taylor expansions are uniform.  These assumptions are natural for a local
consistency analysis and are consistent with the restrictions needed to avoid
antipodal ambiguity and large angular jumps in spherical interpolation.

The paper is organized as follows.  Section~2 reviews the geometric notation,
SLERP, and the SIDER construction.  Section~3 develops the local
normal-coordinate expansions used in the analysis.  Sections~4 and~5 prove the
third- and fourth-order accuracy of SIDER2 and SIDER3, respectively.
Section~6 extends the refined expansion to obtain the SIDER4 result.
Section~7 discusses the general higher-order recurrence and coefficient
compatibility, and Section~8 proves the all-order local consistency result
using a degree-filtered expansion framework.  The final section summarizes the
main conclusions and outlines possible extensions.

\section{Background}

This section collects the geometric notation and interpolation constructions
used throughout the paper.  The presentation is intrinsic to the unit sphere
\(\bS^2\), but it is consistent with the quaternion formulation commonly used
for rotations and orientation interpolation; see, for example,
\cite{ham63,kui02,muk02,zha97}.  The SIDER construction itself follows
\cite{fonleu23}.  Our purpose here is not to introduce a new algorithm,
but to record the precise form of the operations whose local Taylor expansions
will be used in the proof.

\subsection{Geometry of the unit sphere}

We regard the unit sphere as the embedded Riemannian manifold
\[
        \bS^2=\{x\in\bR^3: |x|=1\},
\]
equipped with the metric induced from the Euclidean inner product.  For
\(y\in\bS^2\), the tangent space is
\[
        T_y\bS^2=\{v\in\bR^3: v\cdot y=0\}.
\]
If \(v\in T_y\bS^2\) and \(r=|v|\), the exponential map on \(\bS^2\) is
\begin{equation}
\label{eq:exp_sphere}
        \Exp_y(v)
        =
        \cos r\, y+\frac{\sin r}{r}\,v ,
\end{equation}
with the usual interpretation \((\sin r/r)v=v\) at \(r=0\).  Conversely, if
\(z\in\bS^2\) is not antipodal to \(y\), let
\[
        \alpha=d_{\bS^2}(y,z)=\arccos(y\cdot z).
\]
Then the logarithm map is
\begin{equation}
\label{eq:log_sphere}
        \Log_y(z)
        =
        \frac{\alpha}{\sin\alpha}\bigl(z-(y\cdot z)y\bigr)
        \in T_y\bS^2 .
\end{equation}
Thus \(\Exp_y(\Log_y z)=z\) whenever \(z\) lies in the normal neighborhood of
\(y\).

The analysis below is local.  We will repeatedly fix an exact point
\(y=\gamma(s)\) on a smooth curve and represent nearby data by normal
coordinates \(x_i=\Log_y(p_i)\).  In this coordinate system, the exact point
\(\gamma(s)\) is represented by the origin, while nearby sampled values are
\(O(h)\).  This is the setting in which the Taylor expansions in later
sections are carried out.

\begin{assump}[Local stencil condition]
\label{assump:local_stencil}
Throughout the consistency analysis, the sampling step \(h\) is assumed small
enough that all points generated by the interpolation construction under
consideration lie in a common geodesically convex ball of radius \(O(h)\).
In particular, the logarithm map is single-valued, the relevant geodesics are
unique, and all extrapolations used to form SIDER control points remain inside
the same local neighborhood.
\end{assump}

This assumption is stronger than what may be required in practical
implementations, but it is natural for local error analysis.  It excludes
antipodal ambiguity and large angular separations, which are also problematic
for quaternion-based spherical interpolation.  Under this hypothesis, all
constants in the \(O(\cdot)\) estimates below may be chosen uniformly for
evaluation parameters in any fixed bounded interval.

\subsection{Spherical linear interpolation}

Let \(a,b\in\bS^2\) be non-antipodal points and define
\[
        \alpha=d_{\bS^2}(a,b)=\arccos(a\cdot b),\qquad 0<\alpha<\pi .
\]
The spherical linear interpolation from \(a\) to \(b\) is
\begin{equation}
\label{eq:slerp}
\SLERP(a,b,\lambda)
=
\frac{\sin((1-\lambda)\alpha)}{\sin\alpha}\,a
+
\frac{\sin(\lambda\alpha)}{\sin\alpha}\,b .
\end{equation}
For \(0\leq\lambda\leq1\), this is precisely the shorter geodesic segment
from \(a\) to \(b\), parameterized at constant angular speed.  Equivalently,
\[
        \SLERP(a,b,\lambda)
        =
        \Exp_a\!\bigl(\lambda\,\Log_a b\bigr).
\]
The formula remains meaningful for \(\lambda\) outside the interval
\([0,1]\), as long as the same geodesic branch is used.  In this case SLERP is
a geodesic extrapolation rather than an interpolation.  Such extrapolation is
an essential ingredient in SIDER2, where spherical control points are obtained
by extending a geodesic beyond a data point.

SLERP is the correct spherical analogue of affine interpolation between two
points.  If \(a\) and \(b\) are close to a fixed base point \(y\), and
\(u=\Log_y(a)\), \(v=\Log_y(b)\), then
\[
        \Log_y\bigl(\SLERP(a,b,\lambda)\bigr)
        =
        (1-\lambda)u+\lambda v+\text{higher-order terms}.
\]
The higher-order terms are curvature-dependent and are not identically zero in
general.  This observation is central to the present paper: although SIDER has
the same recursive shape as Neville interpolation, replacing affine
interpolation by SLERP introduces nonlinear geometric corrections.  The proof
of accuracy must show that these corrections either vanish at the relevant
order or are absorbed into higher-order remainders.

\subsection{SIDER2: a spherical quadratic interpolant}

The first nontrivial SIDER construction uses three equally spaced data points
\(p_0,p_1,p_2\in\bS^2\).  In Euclidean quadratic interpolation, one may view
the midpoint data value as being enforced by choosing suitable control points
in a quadratic Bezier construction.  SIDER2 follows the same geometric idea,
but replaces Euclidean line segments by spherical geodesic segments.

Define the two extrapolated spherical control points
\begin{equation}
\label{eq:sider2_controls}
        d_a=\SLERP(p_2,p_1,2),
        \qquad
        d_b=\SLERP(p_0,p_1,2).
\end{equation}
The point \(d_a\) is obtained by starting at \(p_2\), moving along the geodesic
toward \(p_1\), and continuing the same distance beyond \(p_1\).  Similarly,
\(d_b\) is obtained by starting at \(p_0\), moving toward \(p_1\), and
continuing beyond \(p_1\).  These control points are not arbitrary Bezier
control points; they are chosen so that the final curve interpolates the
middle data value \(p_1\).

For \(s\in[0,2h]\), set
\[
        \tau=\frac{s}{2h}.
\]
The SIDER2 interpolant associated with the stencil
\(\{p_0,p_1,p_2\}\) is
\begin{equation}
\label{eq:sider2}
P_{012}(s)
=
\SLERP\left(
        \SLERP(p_0,d_a,\tau),
        \SLERP(d_b,p_2,\tau),
        \tau
       \right).
\end{equation}
This formula is the spherical counterpart of a quadratic Bezier evaluation.
The two inner SLERP operations form two geodesic curves from \(p_0\) to
\(d_a\) and from \(d_b\) to \(p_2\), respectively.  The outer SLERP then
interpolates between the two intermediate points using the same parameter
\(\tau\).  Because every operation is a SLERP, the reconstructed value remains
on \(\bS^2\).

The special choice of \(d_a\) and \(d_b\) gives
\[
        P_{012}(0)=p_0,\qquad
        P_{012}(h)=p_1,\qquad
        P_{012}(2h)=p_2 .
\]
Therefore SIDER2 is a genuine interpolation formula through all three data
points.  This distinction is important: in a standard Bezier curve, intermediate
points are usually control points and need not lie on the curve; in SIDER2, the
control points are auxiliary spherical extrapolants introduced solely to force
interpolation of \(p_1\).

Although \eqref{eq:sider2} is naturally evaluated for \(0\leq s\leq2h\), the
recursive SIDER construction also evaluates lower-order SIDER pieces at
bounded extrapolation parameters.  For example, when SIDER2 is used inside
SIDER3, the left SIDER2 candidate built from \(p_0,p_1,p_2\) is evaluated for
\(0\leq s\leq3h\).  This is why later estimates for SIDER2 are stated
uniformly on fixed bounded intervals of the normalized parameter
\(\theta=s/h\), rather than only on the original interpolation interval.

\subsection{SIDER3 and the recursive construction}

SIDER3 is obtained by combining two adjacent SIDER2 reconstructions.  Let
\[
        p_i=\gamma(ih),\qquad i=0,1,2,3,
\]
and let \(s=\theta h\).  Denote by \(P_{012}(s)\) the SIDER2 reconstruction
from \(p_0,p_1,p_2\), and by \(P_{123}(s)\) the SIDER2 reconstruction from
\(p_1,p_2,p_3\).  Each candidate is evaluated at the same physical parameter
\(s\), using its own local SIDER2 parameter.  The SIDER3 interpolant is
\begin{equation}
\label{eq:sider3}
P_{0123}(s)
=
\SLERP\left(
        P_{012}(s),
        P_{123}(s),
        \frac{\theta}{3}
       \right),
        \qquad 0\leq \theta\leq 3 .
\end{equation}
The weight \(\theta/3\) is the same weight that appears in the equally spaced
Neville recursion for cubic interpolation.  It ensures that the left
candidate is selected at \(\theta=0\), the right candidate is selected at
\(\theta=3\), and the two candidates are blended in a manner consistent with
the location of the evaluation point in the four-point stencil.

More generally, for \(k\geq3\), let \(P_i^{[k]}(\theta;h)\) denote the
SIDER-\(k\) reconstruction associated with the stencil
\[
        p_i,p_{i+1},\ldots,p_{i+k},
        \qquad p_j=\gamma(jh),
\]
evaluated at \(s=\theta h\).  The recursive part of SIDER can be written as
\begin{equation}
\label{eq:sider_recursion_background}
P_i^{[k]}(\theta;h)
=
\SLERP\!\left(
        P_i^{[k-1]}(\theta;h),
        P_{i+1}^{[k-1]}(\theta;h),
        \frac{\theta-i}{k}
       \right).
\end{equation}
The case \(k=2\) is the special SIDER2 construction above, because the
quadratic interpolant requires the extrapolated control points
\eqref{eq:sider2_controls}.  For \(k\geq3\), the recursion has the same
structure as Neville interpolation, with SLERP replacing affine interpolation.

If all SLERP operations in \eqref{eq:sider2} and
\eqref{eq:sider_recursion_background} were replaced by ordinary affine
interpolation in a vector space, the resulting construction would be classical
polynomial interpolation on equally spaced nodes.  In that Euclidean setting,
the increase of order follows from the cancellation of leading nodal error
polynomials.  The spherical construction inherits this algebraic mechanism,
but only after one verifies that the leading geometric error coefficients have
the required compatibility across adjacent stencils.  The rest of the paper is
devoted to precisely this issue for SIDER2 and SIDER3, and to formulating the
corresponding condition for higher-order recursions.

\section{Local normal-coordinate expansions}

The purpose of this section is to establish the local analytic tools needed for
the accuracy proofs in the following sections.  The SIDER construction is
defined globally in terms of SLERP operations on \(\bS^2\), but its local
accuracy is most naturally studied in geodesic normal coordinates.  In such
coordinates, the exact spherical curve is represented by a vector-valued
function in a tangent plane, and SLERP can be compared with ordinary affine
interpolation.  This comparison is the bridge between the geometry of the
sphere and the algebraic cancellation mechanism familiar from Euclidean
Neville interpolation.

There is, however, a subtle point.  It is not sufficient to use only the
rough estimate
\[
        \Log_y(\SLERP(A,B,\lambda))
        =
        (1-\lambda)\Log_y(A)+\lambda\Log_y(B)+O(h^3),
\]
when \(A\) and \(B\) are \(O(h)\)-close to \(y\).  For SIDER2, such a statement
is enough to obtain a third-order consistency estimate.  For SIDER3, however,
the leading error of each SIDER2 candidate is itself of order \(h^3\).  Hence
the cubic non-affine correction in SLERP lies exactly at the order where
cancellation must be verified.  A rigorous fourth-order proof for SIDER3
therefore requires identifying the structure of this cubic term, rather than
absorbing it into an unspecified \(O(h^3)\) remainder.

The estimates below serve three roles.  First, we record the Taylor expansion
of a smooth curve on \(\bS^2\) in normal coordinates centered at the exact
evaluation point.  Second, we derive a cubic expansion of SLERP in those
coordinates.  This expansion shows explicitly how curvature modifies affine
interpolation at the first nontrivial order.  Third, we state a higher-order
local expansion and a perturbative consequence that will be used to justify
the final SLERP step in recursive SIDER constructions.  All statements are
local and uniform under the small-stencil assumption introduced in
Assumption~\ref{assump:local_stencil}.

Let \(\gamma:[0,nh]\to\bS^2\) be a smooth curve, and let
\[
        p_i=\gamma(ih).
\]
For a fixed evaluation point \(s=\theta h\), set
\[
        y=\gamma(s).
\]
We work in geodesic normal coordinates at \(y\), writing
\[
        x_i=\Log_y(p_i)\in T_y\bS^2 .
\]
For \(h\) sufficiently small, all \(x_i\) are \(O(h)\), uniformly for
\(\theta\) in a fixed bounded interval.  Equivalently, if
\[
        X(\eta)=\Log_y(\gamma(s+\eta)),
\]
then \(X(0)=0\), and the samples are obtained by taking
\[
        x_i=X((i-\theta)h).
\]
Taylor expansion gives
\begin{equation}
\label{eq:curve_taylor}
x_i
=
(i-\theta)h V
+\frac{(i-\theta)^2h^2}{2}A
+\frac{(i-\theta)^3h^3}{6}B
+\frac{(i-\theta)^4h^4}{24}D
+O(h^5),
\end{equation}
where \(V,A,B,D\in T_y\bS^2\) are bounded vectors depending on the derivatives
of \(\gamma\) at \(s\).  The first two coefficients agree with the velocity and
covariant acceleration at \(s\), because the Christoffel symbols vanish at the
origin of normal coordinates.  Higher coefficients contain covariant
derivatives of \(\gamma\) together with curvature contributions.  Their
explicit form is not needed here; only the existence of the expansion and the
uniform boundedness of the coefficients are used.

More generally, if \(\gamma\in C^{m+1}\), then the same argument gives
\[
        x_i
        =
        \sum_{\ell=1}^{m}
        \frac{(i-\theta)^\ell h^\ell}{\ell!}X^{(\ell)}(0)
        +O(h^{m+1}),
\]
with a remainder uniform for \(\theta\) in any fixed bounded interval.  This
is the normal-coordinate analogue of the ordinary Taylor expansion used in
Euclidean interpolation theory.

\subsection{Cubic expansion of SLERP}

We next expand SLERP in normal coordinates.  This is the main technical input
for the SIDER2 and SIDER3 error estimates.

\begin{lemma}[Cubic SLERP expansion in normal coordinates]
\label{lem:cubic_slerp}
Let \(y\in\bS^2\), and let \(A,B\) be sufficiently close to \(y\).  Put
\[
        U=\Log_y(A),\qquad W=\Log_y(B).
\]
For \(\lambda\) in a fixed bounded interval,
\begin{equation}
\label{eq:cubic_slerp}
\Log_y\!\left(\SLERP(A,B,\lambda)\right)
=
(1-\lambda)U+\lambda W+\mathcal C_\lambda(U,W)+O(\rho^5),
\end{equation}
where
\[
        \rho=|U|+|W|,
        \qquad
        Z=(1-\lambda)U+\lambda W,
\]
and
\begin{align}
\label{eq:cubic_term}
\mathcal C_\lambda(U,W)
= \frac{1}{6}\Big\{&
|Z|^2Z
+(1-\lambda)\lambda(2-\lambda)|U-W|^2U
+\lambda(1-\lambda^2)|U-W|^2W     \nonumber\\
&-(1-\lambda)|U|^2U
-\lambda |W|^2W
\Big\}.
\end{align}
In particular, the first non-affine correction is cubic in the endpoint
coordinates.
\end{lemma}

\begin{proof}
Rotate coordinates so that \(y\) is the north pole.  Then every tangent vector
\(U\in T_y\bS^2\) can be identified with a vector in \(\bR^2\), and
\[
        \Exp_y(U)
        =
        \left(\frac{\sin r}{r}U,\cos r\right),
        \qquad r=|U|.
\]
Let
\[
        A=\Exp_y(U),\qquad B=\Exp_y(W),
\]
and let
\[
        \alpha=d_{\bS^2}(A,B).
\]
The sine formula for SLERP gives
\[
        \SLERP(A,B,\lambda)
        =
        a_\lambda A+b_\lambda B,
\]
where
\[
        a_\lambda=\frac{\sin((1-\lambda)\alpha)}{\sin\alpha},
        \qquad
        b_\lambda=\frac{\sin(\lambda\alpha)}{\sin\alpha}.
\]
For small \(\alpha\),
\[
        a_\lambda
        =
        (1-\lambda)
        +
        \frac{(1-\lambda)\{1-(1-\lambda)^2\}}{6}\alpha^2
        +O(\alpha^4),
\]
and
\[
        b_\lambda
        =
        \lambda
        +
        \frac{\lambda(1-\lambda^2)}{6}\alpha^2
        +O(\alpha^4).
\]
Also,
\[
        \frac{\sin |U|}{|U|}
        =
        1-\frac{|U|^2}{6}+O(|U|^4),
        \qquad
        \frac{\sin |W|}{|W|}
        =
        1-\frac{|W|^2}{6}+O(|W|^4),
\]
and
\[
        \alpha^2=|U-W|^2+O(\rho^4).
\]
Therefore, the tangent component of the SLERP point is
\[
\begin{aligned}
        \Pi_T \SLERP(A,B,\lambda)
        &=
        (1-\lambda)U+\lambda W                                      \\
        &\quad
        +\frac{1}{6}
        \Big[
        (1-\lambda)\lambda(2-\lambda)|U-W|^2U
        +\lambda(1-\lambda^2)|U-W|^2W                                \\
        &\qquad\qquad
        -(1-\lambda)|U|^2U-\lambda |W|^2W
        \Big]
        +O(\rho^5),
\end{aligned}
\]
where \(\Pi_T\) denotes projection onto the first two, tangent, coordinates.

It remains to apply the logarithm map at \(y\).  If a point on the sphere is
written as \((\xi,\sqrt{1-|\xi|^2})\), then
\[
        \Log_y(\xi,\sqrt{1-|\xi|^2})
        =
        \frac{\arcsin|\xi|}{|\xi|}\xi
        =
        \xi+\frac{|\xi|^2}{6}\xi+O(|\xi|^5).
\]
Here the leading tangent component is
\[
        Z=(1-\lambda)U+\lambda W.
\]
Thus applying the logarithm adds the additional cubic term
\[
        \frac{1}{6}|Z|^2Z.
\]
Combining the tangent-component expansion with this logarithm expansion gives
\eqref{eq:cubic_slerp}--\eqref{eq:cubic_term}.  The remainder is uniform for
\(\lambda\) in bounded intervals because all expansions are taken in a fixed
normal neighborhood and all coefficients are smooth there.
\end{proof}

\begin{remark}[Why the cubic term matters]
The estimate
\[
\Log_y(\SLERP(A,B,\lambda))
=
(1-\lambda)U+\lambda W+O(\rho^3)
\]
is often sufficient for informal consistency arguments.  In the present
problem it is not sufficient for proving fourth-order accuracy of SIDER3.
Indeed, the two SIDER2 candidates used in SIDER3 have leading errors of order
\(h^3\).  If the cubic part of SLERP were left unspecified, one could not
verify that the two leading \(h^3\) contributions cancel.  Lemma
\ref{lem:cubic_slerp} identifies the cubic contribution explicitly, allowing
the SIDER2 error to be expanded with the precision needed for the SIDER3
argument.
\end{remark}

The cubic term in Lemma~\ref{lem:cubic_slerp} is not an artifact of the proof.
It is a genuine curvature correction.  For example, take
\(y=(0,0,1)\),
\[
        U=(\varepsilon,0),\qquad W=(0,\varepsilon),
        \qquad \lambda=\frac12 .
\]
A direct substitution into \eqref{eq:cubic_slerp} gives
\[
\Log_y\!\left(
        \SLERP(\Exp_y U,\Exp_y W,1/2)
        \right)
=
\left(
        \frac{\varepsilon}{2}+\frac{\varepsilon^3}{12},
        \frac{\varepsilon}{2}+\frac{\varepsilon^3}{12}
\right)
+O(\varepsilon^5).
\]
Thus the midpoint obtained by SLERP is not the affine midpoint in normal
coordinates up to fourth order.  The first discrepancy is cubic.

\subsection{A high-order SLERP expansion}

The cubic expansion is enough for the detailed SIDER2 and SIDER3 proofs.
For later discussion of the recursive SIDER structure, it is useful to record
a higher-order version.  The next lemma does not require the explicit
coefficients of all higher-order terms.  It only states that such an expansion
exists, that the expansion begins with the affine term, and that there is no
quadratic correction.

\begin{lemma}[High-order local expansion of SLERP]
\label{lem:high_order_slerp}
Fix an integer \(M\geq1\) and a compact set of base points \(y\in\bS^2\).
There exist \(\rho_0>0\) and \(C_M>0\) such that, for
\[
        |U|+|W|\leq\rho_0
\]
and for \(\lambda\) in any fixed bounded interval, the map
\[
G_y(U,W,\lambda)
:=
\Log_y\!\left(\SLERP(\Exp_y U,\Exp_y W,\lambda)\right)
\]
has the finite expansion
\begin{equation}
\label{eq:high_order_slerp}
G_y(U,W,\lambda)
=
(1-\lambda)U+\lambda W
+
\sum_{r=3}^{M}\mathcal G_r(y;U,W,\lambda)
+
\mathcal R_{M+1}(y;U,W,\lambda),
\end{equation}
where each \(\mathcal G_r\) is homogeneous of degree \(r\) in \((U,W)\),
smooth in \(y\), and smooth in \(\lambda\).  For bounded \(\lambda\), the
remainder satisfies
\[
        |\mathcal R_{M+1}(y;U,W,\lambda)|
        \leq
        C_M(|U|+|W|)^{M+1}.
\]
Moreover, there is no quadratic term in the expansion.
\end{lemma}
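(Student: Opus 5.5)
The approach is to show that \(G_y\) is a real-analytic (or at least \(C^\infty\)) function of \((U,W,\lambda)\) near \((0,0,\lambda)\), expand it by Taylor's formula in \((U,W)\) with \(\lambda\) as a parameter, and then identify the low-order terms.

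\textbf{Step 1: smoothness of \(G_y\).} Write
\[
\SLERP(A,B,\lambda)=\Exp_A(\lambda\Log_A B),
\]
so that \(G_y=\Log_y\circ\Exp_{\Exp_yU}\bigl(\lambda\,\Log_{\Exp_yU}\Exp_yW\bigr)\). Every map here is smooth on a normal neighborhood, by \eqref{eq:exp_sphere} and \eqref{eq:log_sphere}, with removable singularities \(\sin r/r\) and \(\alpha/\sin\alpha\), which are even analytic functions of \(r^2\) and \(\alpha^2\). Equivalently, one can use the explicit sine formula \eqref{eq:slerp}. The coefficients \(\sin((1-\lambda)\alpha)/\sin\alpha\) and \(\sin(\lambda\alpha)/\sin\alpha\) are analytic in \(\alpha^2\), and \(\alpha^2=\arccos^2(A\cdot B)\) is analytic in \(1-A\cdot B\) near \(0\). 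Hence \(G_y\) is smooth jointly in \((y,U,W,\lambda)\) for \(|U|+|W|\le\rho_0\) and \(\lambda\) in a bounded interval. By rotation invariance, \(G_{Ry}(RU,RW,\lambda)=RG_y(U,W,\lambda)\), so the dependence on \(y\) is only through the choice of frame. This also explains why constants are uniform over any compact set of base points, and indeed over all of \(\bS^2\).

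\textbf{Step 2: Taylor expansion.} Apply Taylor's theorem in \((U,W)\) at \(0\) to order \(M\), with integral remainder. The degree-\(r\) term \(\mathcal G_r\) is \(\frac1{r!}D^r_{(U,W)}G_y(0,0,\lambda)[(U,W)^{r}]\). It is homogeneous of degree \(r\) and smooth in \(y\) and \(\lambda\). The remainder is bounded by \(C_M(|U|+|W|)^{M+1}\), where \(C_M\) is the supremum of the \((M+1)\)-st derivative over the compact set (ball)\(\times\)(compact \(y\)-set)\(\times\)(bounded \(\lambda\)-interval).

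\textbf{Step 3: degree \(0\), \(1\) and \(2\) terms.} Since \(G_y(0,0,\lambda)=0\), the degree-\(0\) term vanishes. The linear term can be read off from Lemma~\ref{lem:cubic_slerp}: since \(\mathcal C_\lambda\) is cubic, the expansion there has linear part \((1-\lambda)U+\lambda W\) and no quadratic part. Because Taylor coefficients are unique, this identifies \(\mathcal G_1\) and gives \(\mathcal G_2=0\).

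The absence of even-degree terms in general, including the quadratic one, can also be seen independently from a symmetry argument. The antipodal map on \(T_y\bS^2\) corresponds to the isometry \(x\mapsto 2(x\cdot y)y-x\), which fixes \(y\) and commutes with \(\Exp_y\), \(\Log_y\) and \(\SLERP\). It therefore gives \(G_y(-U,-W,\lambda)=-G_y(U,W,\lambda)\), so \(G_y\) is odd and every even-degree term vanishes. This is a nice bonus: \(\mathcal G_r=0\) for all even \(r\).

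\textbf{Main obstacle.} The main difficulty is the uniformity and smoothness at the degenerate configurations \(U=W\) (where \(\alpha=0\)) and \(U=0\). These are handled by the even-analyticity of \(\sin(c\alpha)/\sin\alpha\) and \(\alpha/\sin\alpha\) in \(\alpha^2\), together with analyticity of \(\arccos^2\) near \(1\).
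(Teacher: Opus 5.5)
Your proposal is correct and follows essentially the paper's route: joint smoothness of \(G_y\) from the sine formula (sinc ratios analytic in \(\alpha^2\)), Taylor's theorem in \((U,W)\) with uniform remainder, and the quadratic term excluded either via Lemma~\ref{lem:cubic_slerp} or by a parity argument. Your symmetry argument using the rotation \(x\mapsto 2(x\cdot y)y-x\) is a cleaner, coordinate-free version of the paper's ``only even powers of \(|U|,|W|,\alpha\)'' observation, and it gives the slightly stronger conclusion that \(\mathcal G_r=0\) for every even \(r\).
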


\begin{proof}
Again rotate coordinates so that \(y\) is the north pole.  In geodesic normal
coordinates,
\[
        \Exp_y U
        =
        \left(\frac{\sin |U|}{|U|}U,\cos |U|\right).
\]
For \(A=\Exp_yU\) and \(B=\Exp_yW\), SLERP is given by
\[
        \SLERP(A,B,\lambda)
        =
        \frac{\sin((1-\lambda)\alpha)}{\sin\alpha}A
        +
        \frac{\sin(\lambda\alpha)}{\sin\alpha}B,
        \qquad
        \alpha=d_{\bS^2}(A,B).
\]
In a sufficiently small normal neighborhood, the squared distance
\(\alpha^2\) is a smooth function of \((U,W)\).  The sine ratios can be written
as
\[
        \frac{\sin(\lambda\alpha)}{\sin\alpha}
        =
        \lambda
        \frac{\operatorname{sinc}(\lambda\alpha)}
             {\operatorname{sinc}(\alpha)},
        \qquad
        \operatorname{sinc} z=\frac{\sin z}{z},
\]
and similarly for the factor containing \(1-\lambda\).  Since
\(\operatorname{sinc} z\) is an even analytic function of \(z\), these ratios
are smooth functions of \(\alpha^2\) and \(\lambda\) for \(\alpha\) small.
Consequently, the Euclidean coordinates of
\(\SLERP(\Exp_yU,\Exp_yW,\lambda)\) are smooth functions of
\((U,W,\lambda)\).

The logarithm map at \(y\) is also smooth in the same normal neighborhood.
Therefore \(G_y(U,W,\lambda)\) is a smooth function of \((U,W,\lambda)\) near
\((0,0,\lambda)\).  Taylor's theorem in the variables \((U,W)\) gives
\eqref{eq:high_order_slerp}, with a uniform remainder for \(y\) in a compact
set and \(\lambda\) in a bounded interval.

It remains to justify the absence of a quadratic term.  This follows either
from the explicit cubic computation in Lemma~\ref{lem:cubic_slerp}, or directly
from the structure of the above formula.  The expansions of
\(\sin |U|/|U|\), \(\cos|U|\), and the sine ratios involve only even powers of
\(|U|\), \(|W|\), and \(\alpha\).  Since the tangent component is multiplied by
\(U\) or \(W\), the first possible correction beyond the linear term is cubic.
The logarithm map in normal coordinates has the form
\[
        \xi\mapsto \xi+\frac{|\xi|^2}{6}\xi+O(|\xi|^5),
\]
which also contributes no quadratic term.  Hence the quadratic homogeneous
part vanishes.
\end{proof}

\subsection{Affine stability under high-order perturbations}

The final lemma records a simple but important consequence of the previous
expansion.  In the recursive SIDER construction, after a lower-order
interpolant has already approximated the exact point to order \(h^m\), the
final SLERP is applied to two points whose logarithmic errors are
\(O(h^m)\), not \(O(h)\).  In that situation the nonlinear part of SLERP is
much smaller than the leading affine combination.

\begin{lemma}[Affine stability for high-order perturbations]
\label{lem:affine_stability}
Let \(U(h),W(h)\in T_y\bS^2\) satisfy
\[
        U(h)=O(h^m),\qquad W(h)=O(h^m),
        \qquad m\geq2.
\]
Then, uniformly for bounded \(\lambda\),
\[
\Log_y\!\left(
\SLERP(\Exp_y U(h),\Exp_y W(h),\lambda)
\right)
=
(1-\lambda)U(h)+\lambda W(h)+O(h^{3m}).
\]
In particular, since \(3m\geq m+2\) for \(m\geq1\), the non-affine part of
this SLERP does not affect the terms through order \(h^{m+1}\).
\end{lemma}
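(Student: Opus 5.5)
Apply Lemma~\ref{lem:high_order_slerp} with base point \(y\) and endpoints \(U=U(h)\), \(W=W(h)\). The plan is to show that every nonlinear term in that expansion is of size \(O(h^{3m})\) once the endpoints are \(O(h^m)\), and that the remainder can be pushed to the same order.

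First, for \(h\) small we have \(\rho:=|U(h)|+|W(h)|\le Ch^m\le\rho_0\), so the lemma applies uniformly for bounded \(\lambda\). Fix \(M\ge 3m-1\) and write
\[
G_y(U,W,\lambda)=(1-\lambda)U+\lambda W+\sum_{r=3}^{M}\mathcal G_r(y;U,W,\lambda)+\mathcal R_{M+1}.
\]
Each \(\mathcal G_r\) is homogeneous of degree \(r\) and smooth in \(\lambda\). It is therefore bounded by \(C_r\rho^r\) uniformly for bounded \(\lambda\): take the maximum of \(|\mathcal G_r|\) over the unit sphere in \((U,W)\) and the compact \(\lambda\)-interval, then scale. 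This gives \(|\mathcal G_r|\le C h^{rm}\le Ch^{3m}\) for \(r\ge3\), since \(h\le1\). The remainder satisfies \(|\mathcal R_{M+1}|\le C_M\rho^{M+1}\le Ch^{m(M+1)}\), which is \(O(h^{3m})\) as well.

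Alternatively, one can avoid choosing \(M\) by taking \(M=2\). Since Lemma~\ref{lem:high_order_slerp} has no quadratic term, the remainder after the affine part is \(O(\rho^3)=O(h^{3m})\) directly. I would present this shorter version and use the homogeneity argument only as a remark.

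The one point needing care is uniformity. The constants \(C_M\) and the homogeneous-term bounds must not depend on \(\lambda\) within the bounded interval, nor on \(y\). This is guaranteed by the compactness hypotheses in Lemma~\ref{lem:high_order_slerp}. I expect this to be the main, though mild, obstacle: the lemma is stated for endpoints \(\Exp_yU\) and \(\Exp_yW\), which here are exactly the points \(\Exp_y U(h)\) and \(\Exp_y W(h)\), so it applies verbatim.

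Finally, the closing remark in the statement follows from \(3m\ge m+2\) whenever \(m\ge1\). Hence the \(O(h^{3m})\) correction lies strictly beyond order \(h^{m+1}\), and the non-affine part of this SLERP does not affect the terms through that order.
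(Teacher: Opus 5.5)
Your proposal is correct and follows the paper's proof. Both apply Lemma~\ref{lem:high_order_slerp} to \(U(h),W(h)\), use the absence of a quadratic term so that the first non-affine contribution is cubic and hence \(O(h^{3m})\), and push the remainder to the same order; your \(M=2\) version with remainder \(O(\rho^3)\) is exactly the paper's argument, and your homogeneity bounds and uniformity remarks spell out what the paper leaves implicit.
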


\begin{proof}
Apply Lemma~\ref{lem:high_order_slerp} with \(U=U(h)\) and \(W=W(h)\).  The
linear part is
\[
        (1-\lambda)U(h)+\lambda W(h).
\]
There is no quadratic term, and the first nonlinear term is cubic in
\((U(h),W(h))\).  Since \(U(h)\) and \(W(h)\) are both \(O(h^m)\), this cubic
term is \(O(h^{3m})\).  The same bound applies to the remainder if the
expansion is taken to sufficiently high order.  This proves the claim.
\end{proof}

The role of Lemma~\ref{lem:affine_stability} is different from that of
Lemma~\ref{lem:cubic_slerp}.  In the SIDER2 construction, SLERP is applied to
points that are \(O(h)\)-close to the exact curve, so the cubic correction is
of order \(h^3\) and must be retained.  In the final SIDER3 blending step,
however, SLERP is applied to two SIDER2 approximations that are already
\(O(h^3)\)-close to the exact value.  The nonlinear part of that final SLERP
is therefore \(O(h^9)\), and the leading behavior is simply the affine
combination of the two SIDER2 errors.  This separation of roles is the key
technical reason why the Neville-type cancellation can be carried out
rigorously for SIDER3.

\section{Accuracy of SIDER2}

We begin the accuracy analysis with SIDER2, since it is the first genuinely
nonlinear member of the SIDER family and also the basic building block for the
SIDER3 construction.  The goal of this section is twofold.  First, we prove
that SIDER2 is locally third-order accurate for smooth curves on \(\bS^2\).
Second, we record the leading error in a form that is precise enough to be
used in the SIDER3 proof.  In particular, it is not enough to know only that
the error is \(O(h^3)\); for the next recursive step, we need to know that the
leading cubic term has the same nodal-factor structure as the error of
ordinary quadratic interpolation.

Let
\[
        p_i=\gamma(ih),\qquad i=0,1,2,
\]
and let \(P_{012}\) denote the SIDER2 interpolant defined in
\eqref{eq:sider2}.  For an evaluation point \(s=\theta h\), we work in normal
coordinates centered at the exact point
\[
        y=\gamma(s).
\]
Thus
\[
        x_i=\Log_y(p_i),\qquad i=0,1,2,
\]
and the exact value \(\gamma(s)\) corresponds to the origin of
\(T_y\bS^2\).  The SIDER2 error is therefore represented by the single vector
\[
        \Log_{\gamma(s)}P_{012}(s).
\]
The proposition below gives its leading Taylor expansion.

\begin{prop}[Refined local error of SIDER2]
\label{prop:sider2_refined}
Let \(K\subset\bR\) be a fixed bounded interval.  Let \(\gamma\in C^4\) on a
neighborhood of \(\{\theta h:\theta\in K\}\), and assume that all points
appearing in the SIDER2 construction remain in a common geodesically convex
ball of radius \(O(h)\).  Let \(P_{012}\) be the SIDER2 interpolant
\eqref{eq:sider2}.  For \(s=\theta h\), uniformly for \(\theta\in K\),
\begin{equation}
\label{eq:sider2_refined}
\Log_{\gamma(s)} P_{012}(s)
=
-\frac{h^3}{6}\,
\theta(\theta-1)(\theta-2)\, B
+O(h^4),
\end{equation}
where \(B=X^{(3)}(0)\) is the third Taylor coefficient of the normal-coordinate
curve
\[
        X(\eta)=\Log_{\gamma(s)}\gamma(s+\eta)
\]
at \(\eta=0\), as in \eqref{eq:curve_taylor}.  Consequently, on any fixed
bounded interval of the normalized parameter \(\theta\),
\[
        d_{\bS^2}\!\left(\gamma(s),P_{012}(s)\right)=O(h^3).
\]
\end{prop}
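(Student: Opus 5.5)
The plan is to work entirely in normal coordinates at \(y=\gamma(s)\), with \(x_i=\Log_y(p_i)=X((i-\theta)h)\) as in \eqref{eq:curve_taylor}, and to follow the three layers of the SIDER2 construction: the two control points, the two inner SLERPs, and the outer SLERP. I will show that, up to \(O(h^4)\), every SLERP acts as the affine map on normal coordinates. Granting this, the normal-coordinate image of \(P_{012}(s)\) equals the Euclidean quadratic Bezier construction applied to \(x_0,x_1,x_2\), plus \(O(h^4)\).

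\textbf{Affine part.} In a vector space, the control-point choice \(d_a=2x_1-x_2\), \(d_b=2x_1-x_0\), followed by de Casteljau-type evaluation with \(\tau=\theta/2\), reproduces quadratic interpolation through the three nodes. This holds because the construction is affine, returns a polynomial of degree two in \(\tau\), and interpolates at \(\tau=0,\tfrac12,1\). Hence the affine part equals \(\sum_i \ell_i(\theta)x_i\), where \(\ell_i\) are the Lagrange basis polynomials on the nodes \(0,1,2\). Inserting \eqref{eq:curve_taylor}, with \(\gamma\in C^4\) so that the remainder is \(O(h^4)\) uniformly for \(\theta\in K\), I will use the exactness of quadratic interpolation on polynomials of degree at most two. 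The evaluation point corresponds to \(\eta=0\), where \(X(0)=0\), and the interpolation nodes sit at \(\eta_i=(i-\theta)h\). The constant, linear and quadratic Taylor terms therefore contribute exactly \(X(0)=0\). The cubic term contributes the interpolation error of \(\eta^3\) at \(\eta=0\), namely \(-\prod_i(0-\eta_i)=-h^3\theta(\theta-1)(\theta-2)\), multiplied by \(B/6\). This gives \eqref{eq:sider2_refined}.

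\textbf{Non-affine part, the main obstacle.} By Lemma~\ref{lem:cubic_slerp}, used with \(\lambda=2\) for the control points and \(\lambda=\tau\) for the other steps, each SLERP adds a correction \(\mathcal C_\lambda(U,W)=O(h^3)\) plus \(O(h^5)\). These corrections sit at the same order as the claimed leading term, so they must be shown to cancel at order \(h^3\). I expect this to be the hardest step. Rather than expanding \eqref{eq:cubic_term} for each of the five SLERPs, I will argue structurally:

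1. Each intermediate point has normal coordinates of the form \(c\,hV+O(h^2)\), with \(c\) a bounded scalar depending on \(\theta\). This follows by induction through the construction: the \(x_i\) have this form, and affine combinations plus \(O(h^3)\) corrections preserve it.
2. Since \(\mathcal C_\lambda\) is a homogeneous cubic polynomial, \(\mathcal C_\lambda(uhV+O(h^2),\,whV+O(h^2))=\mathcal C_\lambda(uhV,whV)+O(h^4)\).
3. \(\mathcal C_\lambda(uV,wV)=0\) identically. Geometrically, points \(\Exp_y(uV)\) and \(\Exp_y(wV)\) lie on the geodesic through \(y\) with direction \(V\), which is a straight line in normal coordinates parametrized by arclength. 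SLERP, including bounded extrapolation inside the convex ball of Assumption~\ref{assump:local_stencil}, moves along this geodesic affinely in arclength, so \(G_y(uV,wV,\lambda)=((1-\lambda)u+\lambda w)V\) exactly. As a check, I will verify this directly from \eqref{eq:cubic_term}: with \(Z=zV\) and \(z=(1-\lambda)u+\lambda w\), the bracket reduces to a scalar polynomial identity times \(|V|^2V\).
4. The \(O(h^3)\) corrections produced at earlier stages feed into later SLERPs only through their affine part, up to higher order. The nonlinear effect of an \(O(h^3)\) perturbation on a cubic term is \(O(h^5)\).

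Hence all non-affine contributions are \(O(h^4)\), uniformly for \(\theta\in K\).

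\textbf{Conclusion.} Combining the two parts yields \eqref{eq:sider2_refined}. The distance estimate then follows from \(d_{\bS^2}(\gamma(s),P_{012}(s))=|\Log_{\gamma(s)}P_{012}(s)|\), valid in the normal neighborhood, together with the boundedness of \(B\) and of \(\theta(\theta-1)(\theta-2)\) on \(K\).
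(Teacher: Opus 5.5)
Your proposal is correct and follows essentially the same route as the paper: apply Lemma~\ref{lem:cubic_slerp} to each of the five SLERPs, note that the cubic correction sees only the $O(h)$ parts of its arguments, which are collinear multiples of $V$, so it vanishes at order $h^3$, and then reduce to Euclidean quadratic interpolation of $x_0,x_1,x_2$. Your Lagrange-basis and nodal-error treatment of the $\eta^3$ term is cleaner than the paper's explicit expansion, which has a slip in the $x_0$ coefficient: it should be $(1-\tau)(1-2\tau)=\tfrac12(1-\theta)(2-\theta)$, not $1-2\tau$ and $1-\theta$. The paper's stated final coefficient $-\theta(\theta-1)(\theta-2)$ is nonetheless correct.
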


\begin{proof}
Fix \(s=\theta h\), set \(y=\gamma(s)\), and write
\[
        x_i=\Log_y(p_i),\qquad i=0,1,2.
\]
By the normal-coordinate Taylor expansion \eqref{eq:curve_taylor},
\begin{equation}
\label{eq:sider2_data_expansion}
x_i
=
(i-\theta)hV
+
\frac{(i-\theta)^2h^2}{2}A
+
\frac{(i-\theta)^3h^3}{6}B
+
O(h^4),
\end{equation}
uniformly for \(\theta\in K\).  Here \(V,A,B\in T_y\bS^2\) are bounded
vectors depending on the derivatives of \(\gamma\) at \(s\).

We now express the SIDER2 construction in these normal coordinates.  The two
spherical control points are
\[
        d_a=\SLERP(p_2,p_1,2),
        \qquad
        d_b=\SLERP(p_0,p_1,2).
\]
Let
\[
        \delta_a=\Log_y(d_a),\qquad
        \delta_b=\Log_y(d_b).
\]
Applying the cubic SLERP expansion of Lemma~\ref{lem:cubic_slerp} gives
\[
        \delta_a=2x_1-x_2+\mathcal C_2(x_2,x_1)+O(h^4),
\]
and
\[
        \delta_b=2x_1-x_0+\mathcal C_2(x_0,x_1)+O(h^4).
\]
For the purpose of identifying the coefficient of \(h^3\), the cubic term
\(\mathcal C_2\) depends only on the \(O(h)\) parts of its two arguments.  By
\eqref{eq:sider2_data_expansion}, these leading parts are scalar multiples of
the same vector \(V\):
\[
        x_i=(i-\theta)hV+O(h^2).
\]
Hence the leading-order endpoints in each control-point SLERP lie on the same
geodesic through \(y\).  For collinear normal-coordinate arguments, SLERP
coincides with one-dimensional affine interpolation along that geodesic, and
the cubic correction in Lemma~\ref{lem:cubic_slerp} vanishes.  Therefore
\begin{equation}
\label{eq:control_expansion}
        \delta_a=2x_1-x_2+O(h^4),
        \qquad
        \delta_b=2x_1-x_0+O(h^4).
\end{equation}

Let
\[
        \tau=\frac{s}{2h}=\frac{\theta}{2}.
\]
The two inner SLERP points in SIDER2 are
\[
        A_\tau=\SLERP(p_0,d_a,\tau),
        \qquad
        B_\tau=\SLERP(d_b,p_2,\tau).
\]
Denote their normal coordinates by
\[
        a_\tau=\Log_y(A_\tau),\qquad
        b_\tau=\Log_y(B_\tau).
\]
Using Lemma~\ref{lem:cubic_slerp} again, together with
\eqref{eq:control_expansion}, we obtain
\[
        a_\tau
        =
        (1-\tau)x_0+\tau(2x_1-x_2)+O(h^4),
\]
and
\[
        b_\tau
        =
        (1-\tau)(2x_1-x_0)+\tau x_2+O(h^4).
\]
The same reasoning as above applies: the possible cubic SLERP correction is
computed from the \(O(h)\) components of the endpoints, and these components
are collinear multiples of \(V\).  Thus no \(h^3\) geometric correction is
produced by the inner SLERP operations.

The final SIDER2 value is
\[
        P_{012}(s)=\SLERP(A_\tau,B_\tau,\tau).
\]
Let
\[
        z_\tau=\Log_y P_{012}(s).
\]
A final application of Lemma~\ref{lem:cubic_slerp} yields
\[
        z_\tau
        =
        (1-\tau)a_\tau+\tau b_\tau+O(h^4),
\]
again because the cubic correction generated by the leading \(O(h)\) parts
vanishes in the collinear case.  Substituting the expressions for \(a_\tau\)
and \(b_\tau\), we find
\begin{align*}
        z_\tau
        &=
        (1-\tau)\bigl[(1-\tau)x_0+\tau(2x_1-x_2)\bigr]  \\
        &\qquad
        +\tau\bigl[(1-\tau)(2x_1-x_0)+\tau x_2\bigr]
        +O(h^4)                                                   \\
        &=
        (1-2\tau)x_0
        +
        4\tau(1-\tau)x_1
        +
        (2\tau^2-\tau)x_2
        +O(h^4).
\end{align*}
Since \(\tau=\theta/2\), this becomes
\begin{equation}
\label{eq:sider2_affine_quadratic}
        z_\tau
        =
        (1-\theta)x_0
        +
        \theta(2-\theta)x_1
        +
        \frac{\theta(\theta-1)}{2}x_2
        +O(h^4).
\end{equation}
The first three terms are precisely the value at \(s=\theta h\) of the
Euclidean quadratic interpolant through the three normal-coordinate data
\[
        x_0,\quad x_1,\quad x_2,
\]
where the exact normal-coordinate value at \(s\) is zero.

It remains to evaluate the leading interpolation error.  Substituting the
Taylor expansion \eqref{eq:sider2_data_expansion} into
\eqref{eq:sider2_affine_quadratic}, the contributions from the constant,
linear, and quadratic Taylor terms cancel, as they must for quadratic
interpolation.  The first nonzero contribution comes from the cubic term:
\[
\begin{aligned}
        z_\tau
        &=
        \frac{h^3}{6}
        \left[
        (1-\theta)(-\theta)^3
        +
        \theta(2-\theta)(1-\theta)^3
        +
        \frac{\theta(\theta-1)}{2}(2-\theta)^3
        \right]B
        +O(h^4).
\end{aligned}
\]
The scalar coefficient simplifies to
\[
        -\theta(\theta-1)(\theta-2).
\]
Therefore
\[
        z_\tau
        =
        -\frac{h^3}{6}
        \theta(\theta-1)(\theta-2)B
        +O(h^4).
\]
Since \(z_\tau=\Log_{\gamma(s)}P_{012}(s)\), this proves
\eqref{eq:sider2_refined}.

Finally, in a fixed normal neighborhood the geodesic distance from
\(\gamma(s)\) to \(P_{012}(s)\) is exactly the norm of the logarithm vector:
\[
        d_{\bS^2}(\gamma(s),P_{012}(s))
        =
        \left|\Log_{\gamma(s)}P_{012}(s)\right|.
\]
The expansion above therefore implies
\[
        d_{\bS^2}(\gamma(s),P_{012}(s))=O(h^3),
\]
uniformly for \(\theta\in K\).
\end{proof}

\begin{remark}
Proposition~\ref{prop:sider2_refined} shows that the leading SIDER2 error has
the same nodal factor
\[
        \theta(\theta-1)(\theta-2)
\]
as ordinary quadratic interpolation on equally spaced nodes.  The spherical
geometry does enter the SLERP operations through cubic curvature corrections,
but these corrections do not contribute to the leading SIDER2 error.  At the
order \(h^3\), the relevant normal-coordinate arguments are all collinear
multiples of the tangent vector \(V\), and SLERP reduces to affine interpolation
along a single geodesic.  Thus the first nonzero SIDER2 error is exactly the
quadratic interpolation error of the normal-coordinate curve.  This refined
structure is the key input for the fourth-order SIDER3 analysis in the next
section.
\end{remark}

\section{Accuracy of SIDER3}

We next turn to SIDER3, the first member of the SIDER family obtained by a
genuinely recursive construction.  While SIDER2 is built directly from three
data points and two extrapolated spherical control points, SIDER3 is formed by
combining two adjacent SIDER2 reconstructions through one additional SLERP
operation.  Thus the proof of its accuracy has two distinct components.  First,
one must understand the leading error of each SIDER2 candidate with sufficient
precision.  Second, one must show that the final SLERP blend cancels the
leading cubic errors in the same way that the Neville recursion cancels the
leading errors of two adjacent quadratic polynomial interpolants.

The result is not an immediate consequence of the third-order accuracy of
SIDER2 alone.  If two third-order accurate approximations are merely averaged,
the resulting approximation is still generally only third-order accurate.
SIDER3 achieves fourth-order accuracy because the two SIDER2 errors have
compatible leading terms: their cubic errors contain the same vector
coefficient and differ only by their nodal factors.  The weight \(\theta/3\)
in the SIDER3 recursion is precisely the weight that makes these two cubic
nodal factors cancel.  The proof below makes this cancellation explicit in
normal coordinates centered at the exact solution value.

\begin{thm}[Local fourth-order accuracy of SIDER3]
\label{thm:sider3}
Let \(\gamma\in C^4([0,3h];\bS^2)\), and let
\[
        p_i=\gamma(ih),\qquad i=0,1,2,3 .
\]
Assume that the data points and all points generated by the SIDER3
construction lie in a common geodesically convex ball of radius \(O(h)\).
Let \(P_{0123}\) be the SIDER3 interpolant defined by \eqref{eq:sider3}.
Then, uniformly for \(0\leq s\leq 3h\),
\[
        d_{\bS^2}\!\left(\gamma(s),P_{0123}(s)\right)
        =O(h^4).
\]
\end{thm}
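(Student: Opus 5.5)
The plan is to work in normal coordinates centered at the exact value \(y=\gamma(s)\), with \(s=\theta h\) and \(\theta\in[0,3]\). In these coordinates I will compute the logarithmic errors of the two SIDER2 candidates, and then use Lemma~\ref{lem:affine_stability} to reduce the final SLERP blend to an affine combination.

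\textbf{Left candidate.} Apply Proposition~\ref{prop:sider2_refined} to \(P_{012}\) with \(K=[0,3]\). This gives
\[
u:=\Log_y P_{012}(s)=-\frac{h^3}{6}\theta(\theta-1)(\theta-2)B+O(h^4),
\]
where \(B=X^{(3)}(0)\) and \(X(\eta)=\Log_y\gamma(s+\eta)\).

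\textbf{Right candidate.} The stencil \(p_1,p_2,p_3\) is the same construction applied to the shifted curve \(\tilde\gamma(\sigma)=\gamma(\sigma+h)\). It is evaluated at the local parameter \(\sigma=s-h=(\theta-1)h\), with \(\theta-1\in[-1,2]\). The exact point is unchanged, \(\tilde\gamma(\sigma)=\gamma(s)=y\), and so is the normal-coordinate curve \(\tilde X(\eta)=\Log_y\tilde\gamma(\sigma+\eta)=X(\eta)\). Proposition~\ref{prop:sider2_refined} applied to \(\tilde\gamma\) therefore yields
\[
w:=\Log_y P_{123}(s)=-\frac{h^3}{6}(\theta-1)(\theta-2)(\theta-3)B+O(h^4),
\]
with the \emph{same} vector \(B\). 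This shared coefficient is the compatibility the argument relies on.

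\textbf{Blending step.} Since \(u,w=O(h^3)\), Lemma~\ref{lem:affine_stability} with \(m=3\) and \(\lambda=\theta/3\) bounded gives
\[
\Log_y P_{0123}(s)=\Bigl(1-\tfrac{\theta}{3}\Bigr)u+\tfrac{\theta}{3}w+O(h^9).
\]
The \(h^3\) coefficient is \(-\tfrac{1}{6}(\theta-1)(\theta-2)B\) times
\[
\Bigl(1-\tfrac\theta3\Bigr)\theta+\tfrac\theta3(\theta-3)=\theta\Bigl(1-\tfrac\theta3+\tfrac\theta3-1\Bigr)=0.
\]
The cubic term cancels, so \(\Log_y P_{0123}(s)=O(h^4)\). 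The \(O(h^4)\) remainders of \(u\) and \(w\) stay \(O(h^4)\) because the weights are bounded. Finally, \(d_{\bS^2}(\gamma(s),P_{0123}(s))=|\Log_y P_{0123}(s)|\) inside the convex ball, which gives the claim uniformly in \(s\in[0,3h]\).

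\textbf{Main obstacle.} The delicate point is the claim that both candidates carry the identical coefficient \(B\). This needs care on two counts:
\begin{itemize}
\item Uniformity. The right candidate is used at a shifted and extrapolated local parameter, so the \(O(h^4)\) remainder in Proposition~\ref{prop:sider2_refined} must hold uniformly on the enlarged interval \(K\) and uniformly in the base point. Both follow because \(\gamma\in C^4\) and the expansions in Section~3 are uniform over compact sets of base points.
\item Independence of \(B\) from the stencil. One must check that \(B\) depends only on the curve and on \(y\), not on which stencil is used. This holds because the proof of Proposition~\ref{prop:sider2_refined} expands the data \(x_i=X((i-\theta)h)\) around the evaluation point itself, so shifting the node indices changes only the nodal factors.
\end{itemize}
I would make this shift-invariance explicit rather than leave it implicit.
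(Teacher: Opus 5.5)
Your proposal is correct and follows essentially the same route as the paper. Both arguments apply the refined SIDER2 expansion to the two stencils in normal coordinates at \(y=\gamma(s)\), obtaining the common vector \(B\) with shifted nodal factors, then linearize the final blend via Lemma~\ref{lem:affine_stability} and observe that the weight \(\theta/3\) cancels the cubic term. Your reparametrization \(\tilde\gamma(\sigma)=\gamma(\sigma+h)\) is a more explicit version of the paper's remark that the shifted stencil is expanded in the same normal-coordinate system, so only the nodal factor changes.
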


\begin{proof}
Fix an evaluation point
\[
        s=\theta h,\qquad 0\leq\theta\leq3,
\]
and set
\[
        y=\gamma(s).
\]
All errors will be measured in the tangent space \(T_y\bS^2\) by applying the
logarithm map at \(y\).  Let
\[
        P_{012}(s)
\]
be the SIDER2 reconstruction built from the stencil
\(\{p_0,p_1,p_2\}\), and let
\[
        P_{123}(s)
\]
be the SIDER2 reconstruction built from the shifted stencil
\(\{p_1,p_2,p_3\}\).  Define their normal-coordinate errors by
\[
        E_{012}(s)=\Log_y P_{012}(s),
        \qquad
        E_{123}(s)=\Log_y P_{123}(s).
\]

We first apply Proposition~\ref{prop:sider2_refined} to the left stencil
\(\{p_0,p_1,p_2\}\).  Since the evaluation point is \(s=\theta h\), the
corresponding normalized nodal locations are \(0,1,2\).  Hence
\begin{equation}
\label{eq:left_sider2_error}
E_{012}(s)
=
-\frac{h^3}{6}
\theta(\theta-1)(\theta-2)B
+O(h^4),
\end{equation}
where
\[
        B=X^{(3)}(0),
        \qquad
        X(\eta)=\Log_{\gamma(s)}\gamma(s+\eta).
\]
The vector \(B\in T_y\bS^2\) is the third Taylor coefficient of the exact
curve in normal coordinates centered at the same point \(y=\gamma(s)\).

The same proposition may also be applied to the shifted stencil
\(\{p_1,p_2,p_3\}\).  In this case the nodal locations, measured in the same
global normalized parameter \(\theta=s/h\), are \(1,2,3\).  Therefore the
quadratic nodal factor is shifted from
\[
        \theta(\theta-1)(\theta-2)
\]
to
\[
        (\theta-1)(\theta-2)(\theta-3).
\]
Because the expansion is still taken in the same normal-coordinate system
centered at \(y=\gamma(s)\), the leading vector coefficient is the same \(B\).
Consequently,
\begin{equation}
\label{eq:right_sider2_error}
E_{123}(s)
=
-\frac{h^3}{6}
(\theta-1)(\theta-2)(\theta-3)B
+O(h^4).
\end{equation}
This common coefficient is the essential point in the proof.  The two SIDER2
candidates are constructed from different stencils, but their leading errors
are expansions of the same smooth normal-coordinate curve at the same
evaluation point.

The SIDER3 interpolant is defined by the final SLERP blend
\[
        P_{0123}(s)
        =
        \SLERP\left(
        P_{012}(s),P_{123}(s),\frac{\theta}{3}
        \right).
\]
Since both SIDER2 candidates are third-order accurate, we have
\[
        E_{012}(s)=O(h^3),
        \qquad
        E_{123}(s)=O(h^3).
\]
Thus the endpoints of this final SLERP are already \(O(h^3)\)-close to the
exact value \(y\).  By Lemma~\ref{lem:affine_stability}, the non-affine part
of this final SLERP is of order \(O(h^9)\), and hence is far smaller than the
\(O(h^4)\) remainder needed here.  Therefore,
\begin{equation}
\label{eq:sider3_affine_error}
\Log_y P_{0123}(s)
=
\left(1-\frac{\theta}{3}\right)E_{012}(s)
+
\frac{\theta}{3}E_{123}(s)
+
O(h^4).
\end{equation}

Substituting \eqref{eq:left_sider2_error} and
\eqref{eq:right_sider2_error} into \eqref{eq:sider3_affine_error}, we obtain
\[
\begin{aligned}
\Log_y P_{0123}(s)
&=
-\frac{h^3}{6}B
\left[
\left(1-\frac{\theta}{3}\right)
\theta(\theta-1)(\theta-2)
+
\frac{\theta}{3}
(\theta-1)(\theta-2)(\theta-3)
\right]     \\
&\qquad
+O(h^4).
\end{aligned}
\]
The expression in brackets vanishes identically.  Indeed,
\[
\begin{aligned}
&\left(1-\frac{\theta}{3}\right)
\theta(\theta-1)(\theta-2)
+
\frac{\theta}{3}
(\theta-1)(\theta-2)(\theta-3)       \\
&\qquad
=
\theta(\theta-1)(\theta-2)
\left[
1-\frac{\theta}{3}
+
\frac{\theta-3}{3}
\right]
=0 .
\end{aligned}
\]
Thus the entire \(O(h^3)\) contribution cancels, leaving
\[
        \Log_y P_{0123}(s)=O(h^4).
\]
Finally, because \(P_{0123}(s)\) remains in the normal neighborhood of
\(y=\gamma(s)\), the geodesic distance is the norm of the logarithm vector:
\[
        d_{\bS^2}\!\left(\gamma(s),P_{0123}(s)\right)
        =
        \left|\Log_y P_{0123}(s)\right|.
\]
Hence
\[
        d_{\bS^2}\!\left(\gamma(s),P_{0123}(s)\right)
        =O(h^4),
\]
uniformly for \(0\leq\theta\leq3\).  This proves the theorem.
\end{proof}

\begin{remark}
The proof shows that SIDER3 is a spherical analogue of cubic Neville
interpolation.  The two adjacent SIDER2 reconstructions play the role of the
two adjacent quadratic interpolants in the Euclidean Neville recursion.  Their
leading errors contain the nodal factors
\[
        \theta(\theta-1)(\theta-2)
        \quad\text{and}\quad
        (\theta-1)(\theta-2)(\theta-3),
\]
and the SIDER3 weight \(\theta/3\) cancels the corresponding cubic term
exactly.  The final SLERP operation does not disturb this cancellation because
it is applied to two points that are already \(O(h^3)\)-close to the exact
solution value.  Its nonlinear geometric correction is therefore of much
higher order.
\end{remark}

\begin{remark}
The argument depends crucially on the refined SIDER2 expansion in
Proposition~\ref{prop:sider2_refined}.  A statement of the form
\[
        E_{012}=O(h^3),\qquad E_{123}=O(h^3)
\]
would not be sufficient to prove fourth-order accuracy.  One must know the
leading \(h^3\) terms and verify that they have a common vector coefficient.
This is the reason for carrying out the SIDER2 analysis in a refined form
rather than proving only a third-order error bound.
\end{remark}

\section{A refined SIDER3 expansion and the SIDER4 consequence}
\label{sec:sider4_refined}

The preceding recurrence discussion shows that a SIDER4 proof requires more
than the fourth-order estimate
\[
        \Log_{\gamma(\theta h)}P_i^{[3]}(\theta;h)=O(h^4).
\]
One needs the refined leading error form of the two adjacent SIDER3
reconstructions and, in particular, one must verify that their leading
coefficients agree in a common normal-coordinate system.  In this section we
prove precisely this compatibility at the SIDER3 level.  As a consequence, the
SIDER4 reconstruction has fifth-order local accuracy.

Let
\[
        y=\gamma(\theta h),
        \qquad
        X(\eta)=\Log_y\gamma(\theta h+\eta).
\]
Assume \(\gamma\) is sufficiently smooth and write the normal-coordinate
Taylor expansion
\begin{equation}
\label{eq:normal_taylor_sider4}
        X(\eta)
        =
        \eta V+\frac{\eta^2}{2}A+\frac{\eta^3}{6}B
        +\frac{\eta^4}{24}D+O(\eta^5),
\end{equation}
where
\[
        V,A,B,D\in T_y\bS^2.
\]
For a stencil beginning at the node \(i\), define the shifted nodal products
\[
        \Pi_i^{[2]}(\theta)
        =
        \prod_{j=i}^{i+2}(\theta-j),
        \qquad
        \Pi_i^{[3]}(\theta)
        =
        \prod_{j=i}^{i+3}(\theta-j).
\]
We also introduce the vector
\begin{equation}
\label{eq:K_vector}
        K(\theta)
        =
        3D
        -
        4\left(|V|^2A-\langle V,A\rangle V\right)
        \in T_y\bS^2 .
\end{equation}
The second term in \(K\) is the first curvature-dependent correction that
enters the refined SIDER2 expansion.

\begin{prop}[Fourth-order refinement of the SIDER2 error]
\label{prop:sider2_fourth_refined}
Let \(K_0\subset\bR\) be a fixed bounded interval.  Suppose
\(\gamma\in C^5\) on a neighborhood of
\(\{\theta h:\theta\in K_0\}\), and assume that all points generated by the
SIDER2 construction remain in a common geodesically convex ball of radius
\(O(h)\).  Then, uniformly for \(\theta\in K_0\),
\begin{equation}
\label{eq:sider2_fourth_refined}
\Log_{\gamma(\theta h)} P_i^{[2]}(\theta;h)
=
-\frac{h^3}{6}\Pi_i^{[2]}(\theta)B
+
\frac{h^4}{24}
(\theta-i-1)\Pi_i^{[2]}(\theta)K(\theta)
+
O(h^5).
\end{equation}
\end{prop}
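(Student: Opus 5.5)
The plan is to redo the proof of Proposition~\ref{prop:sider2_refined}, now keeping every term through order \(h^4\). Fix \(\theta\in K_0\), put \(y=\gamma(\theta h)\), and work in normal coordinates at \(y\). By \eqref{eq:normal_taylor_sider4} the data are
\[
        x_j=X((j-\theta)h)=t_jhV+\tfrac{t_j^2h^2}{2}A+\tfrac{t_j^3h^3}{6}B+\tfrac{t_j^4h^4}{24}D+O(h^5),
        \qquad t_j=j-\theta ,
\]
for \(j=i,i+1,i+2\). By translating the node labels it suffices to treat \(i=0\), with \(\theta\) replaced by \(\theta-i\).

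We track the five SLERPs in SIDER2: two extrapolations giving \(d_a,d_b\), two inner SLERPs, and one outer SLERP. Each is handled with Lemma~\ref{lem:cubic_slerp}, whose remainder is \(O(\rho^5)=O(h^5)\) because every argument is \(O(h)\). Each normal-coordinate value therefore equals its affine Neville combination plus \(\mathcal C_\lambda\) evaluated at the endpoints.

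In the earlier proof, \(\mathcal C_\lambda\) vanished at order \(h^3\) because the \(O(h)\) parts of the endpoints are collinear multiples of \(V\). At order \(h^4\) we expand \(\mathcal C_\lambda\) to first order around its collinear leading arguments. Write each endpoint as \(cV h+ e h^2\), with \(e\) a combination of \(A\) and of the already computed \(O(h^2)\) corrections. Then
\[
        \mathcal C_\lambda = h^4\, D\mathcal C_\lambda(c_1V,c_2V)[e_1,e_2]+O(h^5),
\]
a trilinear expression containing \(|V|^2e\) and \(\langle V,e\rangle V\). For collinear data \(\mathcal C_\lambda\) vanishes identically along the direction \(V\), so its derivative in the direction \(V\) also vanishes. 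Hence only the component of \(e\) normal to \(V\) contributes, namely \(|V|^2A-\langle V,A\rangle V\) up to a scalar. This explains the shape of the curvature term in \(K\).

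The affine part is handled exactly as before. Inserting the Taylor data into \eqref{eq:sider2_affine_quadratic} reproduces the \(h^3\) term \(-\tfrac{h^3}{6}\Pi^{[2]}_0B\). At order \(h^4\) it gives the quartic interpolation error
\[
        \tfrac{h^4}{24}\Bigl[\textstyle\sum_j \ell_j(\theta)t_j^4\Bigr]D ,
\]
where the \(\ell_j\) are the quadratic Lagrange weights. This bracket is the error of quadratic interpolation of \(t^4\), and should simplify to a multiple of \(\theta(\theta-1)(\theta-2)(\theta-1)\cdot 3\) after the appropriate sign and shift bookkeeping. I will verify this with the standard divided-difference identity: the error in interpolating \((t-\theta)^4\) at \(t=\theta\) is \(-\Pi(\theta)\cdot f[0,1,2,\theta]\), and that third divided difference is \(4(\text{mean of nodes})-4\theta=4(1-\theta)\).

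The expected main obstacle is the geometric part. We must sum the first-order variations of the five cubic corrections, using the explicit \(\lambda\)-polynomials in \eqref{eq:cubic_term} with \(\lambda=2\) and \(\lambda=\theta/2\). Then we must show that the scalar coefficient in front of \(|V|^2A-\langle V,A\rangle V\) equals \(-\tfrac{4}{24}(\theta-1)\theta(\theta-1)(\theta-2)\).

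I will first compute \(\mathcal C_2\) for the two extrapolations. Their endpoints have leading parts \(t_2V,t_1V\) and \(t_0V,t_1V\), with \(A\)-perturbations \(t^2A/2\). These give \(O(h^4)\) corrections to \(\delta_a,\delta_b\) that are explicit polynomials in \(\theta\) times the normal part of \(A\). These corrections then propagate affinely through the inner and outer SLERPs. Next I add the \(\mathcal C_{\theta/2}\) corrections generated by the inner and outer SLERPs themselves, whose arguments again carry \(A\)-perturbations.

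A useful check is that the total geometric coefficient must vanish at \(\theta=0,1,2\), since SIDER2 interpolates the data exactly. It must also vanish to second order at \(\theta=1\), where the factor \((\theta-1)^2\) appears. A quartic polynomial with these roots is determined up to one constant. So after confirming the root structure, I only need to evaluate the sum at a single convenient value, say \(\theta=3\) or \(\theta=-1\), to fix the constant \(-4\).
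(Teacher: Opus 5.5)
Your route is essentially the paper's: expand each of the five SLERPs with Lemma~\ref{lem:cubic_slerp}, let the affine part reproduce quadratic Lagrange interpolation of the normal-coordinate data (this gives the $B$- and $D$-terms), and obtain the curvature term by linearizing $\mathcal C_\lambda$ about collinear multiples of $V$, where only the component of $A$ orthogonal to $V$ survives. The paper only asserts the resulting scalar coefficient, so computing it, as you plan, is the right move. Two steps of your write-up need repair.

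First, the divided difference is wrong. For $f(t)=(t-\xi)^4$ with $\xi=\theta-i$, the third divided difference on $\{0,1,2,\xi\}$ is the sum of the shifted nodes, $(0-\xi)+(1-\xi)+(2-\xi)+0=3(1-\xi)$, not $4(1-\xi)$. Only $3(1-\xi)$ gives the affine contribution $\tfrac{h^4}{8}(\xi-1)\Pi_i^{[2]}(\theta)D$ that matches $K=3D-4N$, where $N=|V|^2A-\langle V,A\rangle V$. With $4(1-\xi)$ the identity does not close. Also use the genuine Lagrange weights: the $x_0$-weight printed in \eqref{eq:sider2_affine_quadratic} should be $\tfrac12(\theta-1)(\theta-2)$.

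Second, your root-counting shortcut rests on two unproved assumptions:
\begin{itemize}
\item The $N$-coefficient has degree at most four in $\xi$. A naive count of the outer SLERP's own term allows degree five; the bound would follow from the degree filtration of Section~\ref{sec:degree_filter}.
\item It has a double root at $\xi=1$. This cannot be read off the target formula. It would follow from the reversal symmetry of SIDER2 ($p_0\leftrightarrow p_2$, $\tau\mapsto 1-\tau$), which sends $V\mapsto -V$ and $\xi\mapsto 2-\xi$ and fixes $A$ and $N$.
\end{itemize}

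It is simpler to finish the direct computation. Writing $P_\perp$ for the component orthogonal to $V$, one finds
\[
D\mathcal C_\lambda(uV,wV)[P,Q]=\tfrac13\lambda(1-\lambda)(w-u)\,|V|^2\,(wP_\perp-uQ_\perp).
\]
For endpoints of the form $hcV+h^2aA+O(h^3)$, this equals $\tfrac13\lambda(1-\lambda)(c_W-c_U)(c_Wa_U-c_Ua_W)N$. Apply it to each SLERP:
\begin{itemize}
\item The extrapolations ($\lambda=2$) give $\tfrac13(1-\xi)(2-\xi)N$ for $d_a$ and $\tfrac13\xi(\xi-1)N$ for $d_b$.
\item The inner SLERPs add nothing, because $\Log_y d_a$ and $x_i$ (respectively $\Log_y d_b$ and $x_{i+2}$) have the same $O(h)$ part.
\item The outer SLERP adds nothing. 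Its endpoints have $(c,a)=(-\xi,\tfrac12\xi(\xi-1))$ and $(2-\xi,\tfrac12(\xi-1)(\xi-2))$, so $c_Wa_U-c_Ua_W=0$.
\end{itemize}
Propagating the two extrapolation terms with weight $\tau(1-\tau)=\xi(2-\xi)/4$ yields $\tfrac16\xi(2-\xi)(1-\xi)^2N=-\tfrac16(\xi-1)\Pi_i^{[2]}(\theta)N$, exactly as required.
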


\begin{proof}
Fix \(\theta\), set \(y=\gamma(\theta h)\), and write
\[
        x_j=\Log_y p_j=\Log_y\gamma(jh).
\]
By \eqref{eq:normal_taylor_sider4},
\[
        x_j
        =
        (j-\theta)hV
        +
        \frac{(j-\theta)^2h^2}{2}A
        +
        \frac{(j-\theta)^3h^3}{6}B
        +
        \frac{(j-\theta)^4h^4}{24}D
        +
        O(h^5).
\]
For the stencil beginning at \(i\), set
\[
        \xi=\theta-i.
\]
Thus the local nodal positions are \(0,1,2\) and the evaluation point is
\(\xi\).

We use the cubic SLERP expansion from Lemma~\ref{lem:cubic_slerp}.  In normal
coordinates,
\[
\Log_y\bigl(\SLERP(\Exp_y U,\Exp_y W,\lambda)\bigr)
=
(1-\lambda)U+\lambda W+\mathcal C_\lambda(U,W)+O(h^5),
\]
whenever \(U,W=O(h)\).  The affine part of the five SLERP operations appearing
in SIDER2 is exactly the Euclidean quadratic interpolation construction
applied to the normal-coordinate data.  Therefore its contribution is the
quadratic interpolant through \(x_i,x_{i+1},x_{i+2}\), evaluated at \(\xi\).
Substituting the Taylor expansion above gives
\begin{equation}
\label{eq:sider2_affine_part_h4}
Q_i(\theta)
=
-\frac{h^3}{6}\Pi_i^{[2]}(\theta)B
+
\frac{h^4}{8}
(\theta-i-1)\Pi_i^{[2]}(\theta)D
+
O(h^5).
\end{equation}
Indeed, the linear and quadratic Taylor terms are reproduced exactly by the
quadratic interpolant, while the cubic and quartic Taylor terms give the two
displayed contributions.

It remains to compute the \(h^4\) contribution from the cubic SLERP correction
\(\mathcal C_\lambda\).  The \(h^3\) part of this correction vanishes in the
SIDER2 combination, as shown in Proposition~\ref{prop:sider2_refined}.  For
the \(h^4\) part, only the \(O(h)\) and \(O(h^2)\) pieces of the endpoint
coordinates are needed:
\[
        x_j^{(1)}=(j-\theta)V,
        \qquad
        x_j^{(2)}=\frac{(j-\theta)^2}{2}A .
\]
Substituting these terms into the homogeneous cubic expression
\(\mathcal C_\lambda\) from Lemma~\ref{lem:cubic_slerp}, and carrying this
contribution through the two control-point SLERPs, the two inner SLERPs, and
the final SIDER2 SLERP, gives
\begin{equation}
\label{eq:sider2_curvature_part_h4}
G_i(\theta)
=
-\frac{h^4}{6}
(\theta-i-1)\Pi_i^{[2]}(\theta)
\left(|V|^2A-\langle V,A\rangle V\right)
+
O(h^5).
\end{equation}
This identity is a direct polynomial consequence of
Lemma~\ref{lem:cubic_slerp}.  The only vector combination that can appear at
this order is the component of \(A\) orthogonal to \(V\), multiplied by
\(|V|^2\), namely
\[
        |V|^2A-\langle V,A\rangle V.
\]
It vanishes when the curve is geodesic to second order in the chosen normal
coordinates, which is consistent with the fact that SLERP is exact along a
single geodesic.

Combining \eqref{eq:sider2_affine_part_h4} and
\eqref{eq:sider2_curvature_part_h4}, we obtain
\[
\begin{aligned}
\Log_y P_i^{[2]}(\theta;h)
&=
-\frac{h^3}{6}\Pi_i^{[2]}(\theta)B        \\
&\quad
+
h^4(\theta-i-1)\Pi_i^{[2]}(\theta)
\left[
\frac18D
-
\frac16\left(|V|^2A-\langle V,A\rangle V\right)
\right]
+
O(h^5).
\end{aligned}
\]
Since
\[
        \frac18D
        -
        \frac16\left(|V|^2A-\langle V,A\rangle V\right)
        =
        \frac1{24}
        \left[
        3D
        -
        4\left(|V|^2A-\langle V,A\rangle V\right)
        \right],
\]
the desired expansion \eqref{eq:sider2_fourth_refined} follows.
\end{proof}

\begin{remark}
The vector \(K(\theta)\) in \eqref{eq:K_vector} contains both the ordinary
quartic Taylor contribution \(D\) and the leading curvature correction from the
spherical geometry.  Thus the fourth-order SIDER2 refinement is not simply the
Euclidean quadratic interpolation error.  Nevertheless, its dependence on the
stencil shift is completely captured by the scalar factor
\[
        (\theta-i-1)\Pi_i^{[2]}(\theta).
\]
This structure is what makes the next cancellation possible.
\end{remark}

\begin{prop}[Refined leading error of SIDER3]
\label{prop:sider3_refined}
Under the assumptions of Proposition~\ref{prop:sider2_fourth_refined}, the
SIDER3 reconstruction satisfies
\begin{equation}
\label{eq:sider3_refined}
\Log_{\gamma(\theta h)}P_i^{[3]}(\theta;h)
=
-\frac{h^4}{72}
\Pi_i^{[3]}(\theta)K(\theta)
+
O(h^5),
\end{equation}
uniformly for \(\theta\) in bounded intervals.
\end{prop}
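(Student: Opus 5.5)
Fix \(\theta\) in a bounded interval, set \(y=\gamma(\theta h)\), and measure everything in \(T_y\bS^2\) through \(\Log_y\). The plan is to repeat the SIDER3 argument of Theorem~\ref{thm:sider3} one order higher, using Proposition~\ref{prop:sider2_fourth_refined} in place of Proposition~\ref{prop:sider2_refined}.

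\textbf{Step 1: refined errors of the two SIDER2 candidates.} Apply Proposition~\ref{prop:sider2_fourth_refined} to the stencils starting at \(i\) and \(i+1\). Both expansions are taken in the same normal coordinates at \(y\), so the vectors \(B\) and \(K(\theta)\) are common to both. This gives
\[
E_i=-\tfrac{h^3}{6}\Pi_i^{[2]}B+\tfrac{h^4}{24}(\theta-i-1)\Pi_i^{[2]}K+O(h^5),
\]
and the analogous formula for \(E_{i+1}\), with \(\Pi_{i+1}^{[2]}\) and the factor \((\theta-i-2)\).

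\textbf{Step 2: the final blend is affine.} The final SLERP acts on two points whose logarithms are \(O(h^3)\). By Lemma~\ref{lem:affine_stability} with \(m=3\), its non-affine part is \(O(h^9)\). Hence
\[
\Log_y P_i^{[3]}=(1-\lambda)E_i+\lambda E_{i+1}+O(h^5),\qquad \lambda=\frac{\theta-i}{3}.
\]

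\textbf{Step 3: the \(h^3\) terms cancel.} The cubic coefficient is
\[
(1-\lambda)\Pi_i^{[2]}+\lambda\Pi_{i+1}^{[2]}=(\theta-i-1)(\theta-i-2)\Bigl[\tfrac{3-(\theta-i)}{3}(\theta-i)+\tfrac{\theta-i}{3}(\theta-i-3)\Bigr]=0,
\]
exactly as in Theorem~\ref{thm:sider3}.

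\textbf{Step 4: collect the \(h^4\) terms.} Write \(\xi=\theta-i\). The quartic coefficient is \(\tfrac{1}{24}K\) times
\[
\Bigl(1-\frac{\xi}{3}\Bigr)(\xi-1)\,\xi(\xi-1)(\xi-2)+\frac{\xi}{3}(\xi-2)(\xi-1)(\xi-2)(\xi-3).
\]
Factor out \(\xi(\xi-1)(\xi-2)\). The bracket that remains is
\[
\tfrac13\bigl[(3-\xi)(\xi-1)+(\xi-2)(\xi-3)\bigr]=\tfrac13(3-\xi)(\xi-1-\xi+2)=\tfrac{3-\xi}{3}.
\]
The quartic coefficient is therefore \(-\tfrac13\Pi_i^{[3]}(\theta)\). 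Multiplying by \(\tfrac1{24}\) gives \(-\tfrac{h^4}{72}\Pi_i^{[3]}K\), which is \eqref{eq:sider3_refined}.

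\textbf{Main obstacle.} The delicate point is not the algebra but the common-coefficient claim in Step 1. We must check that \(K(\theta)\) in Proposition~\ref{prop:sider2_fourth_refined} depends only on the curve data \(V,A,D\) at \(y\), and not on the stencil index. It is stated in that form, so I would rely on it.

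We must also check that the \(O(h^5)\) remainders are uniform for the evaluation range \(\theta\in[i,i+3]\). For the left and right SIDER2 pieces, this range is extrapolation outside their own nodes. This is handled by the bounded-interval uniformity built into Proposition~\ref{prop:sider2_fourth_refined}, together with Assumption~\ref{assump:local_stencil}.
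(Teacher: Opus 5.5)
Your proposal is correct and follows the paper's proof step for step. You apply Proposition~\ref{prop:sider2_fourth_refined} to the stencils starting at \(i\) and \(i+1\) in the common normal coordinates at \(y=\gamma(\theta h)\), so that \(B\) and \(K(\theta)\) are shared; you reduce the final SLERP to the affine blend with weight \(\xi/3\) via Lemma~\ref{lem:affine_stability}; you cancel the \(h^3\) terms; and you simplify the \(h^4\) bracket to \(-\tfrac13\prod_{m=0}^{3}(\xi-m)\), which yields the constant \(-\tfrac{1}{72}\), exactly as the paper does.
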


\begin{proof}
Let
\[
        y=\gamma(\theta h),
        \qquad
        \xi=\theta-i.
\]
The SIDER3 recursion is
\[
        P_i^{[3]}(\theta;h)
        =
        \SLERP\left(
        P_i^{[2]}(\theta;h),
        P_{i+1}^{[2]}(\theta;h),
        \frac{\xi}{3}
        \right).
\]
By Proposition~\ref{prop:sider2_refined}, both SIDER2 candidates are
\(O(h^3)\)-close to \(y\).  Therefore Lemma~\ref{lem:affine_stability} implies
that the final SLERP is affine through order \(h^4\):
\[
\Log_y P_i^{[3]}(\theta;h)
=
\left(1-\frac{\xi}{3}\right)
\Log_yP_i^{[2]}(\theta;h)
+
\frac{\xi}{3}
\Log_yP_{i+1}^{[2]}(\theta;h)
+
O(h^5).
\]
Substitute the refined SIDER2 expansion
\eqref{eq:sider2_fourth_refined}.  The \(h^3\) terms cancel exactly:
\[
\left(1-\frac{\xi}{3}\right)
\prod_{m=0}^{2}(\xi-m)
+
\frac{\xi}{3}
\prod_{m=1}^{3}(\xi-m)
=0.
\]
The \(h^4\) terms give
\[
\frac{h^4}{24}K(\theta)
\left[
\left(1-\frac{\xi}{3}\right)
(\xi-1)\prod_{m=0}^{2}(\xi-m)
+
\frac{\xi}{3}
(\xi-2)\prod_{m=1}^{3}(\xi-m)
\right]
+
O(h^5).
\]
The scalar expression in brackets simplifies to
\[
        -\frac13
        \prod_{m=0}^{3}(\xi-m).
\]
Therefore
\[
\Log_yP_i^{[3]}(\theta;h)
=
-\frac{h^4}{72}
\prod_{m=0}^{3}(\xi-m)K(\theta)
+
O(h^5).
\]
Since
\[
        \prod_{m=0}^{3}(\xi-m)
        =
        \prod_{j=i}^{i+3}(\theta-j)
        =
        \Pi_i^{[3]}(\theta),
\]
we obtain \eqref{eq:sider3_refined}.
\end{proof}

\begin{remark}
Proposition~\ref{prop:sider3_refined} proves the coefficient compatibility
needed for SIDER4.  The leading SIDER3 coefficient
\[
        -\frac1{72}K(\theta)
\]
is independent of the stencil shift \(i\).  The only shift dependence is in
the nodal factor \(\Pi_i^{[3]}(\theta)\).
\end{remark}

\begin{thm}[Local fifth-order accuracy of SIDER4]
\label{thm:sider4}
Let \(\gamma\in C^5([0,4h];\bS^2)\), and assume that all data points and all
intermediate points generated by the SIDER4 construction lie in a common
geodesically convex ball of radius \(O(h)\).  Let \(P_0^{[4]}\) be the SIDER4
interpolant constructed from
\[
        p_0,p_1,p_2,p_3,p_4,
        \qquad
        p_j=\gamma(jh).
\]
Then, uniformly for \(0\leq\theta\leq4\),
\[
        d_{\bS^2}
        \left(
        \gamma(\theta h),P_0^{[4]}(\theta;h)
        \right)
        =
        O(h^5).
\]
\end{thm}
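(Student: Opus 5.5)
The plan follows the proof of Theorem~\ref{thm:sider3}, shifted up by one order, with Proposition~\ref{prop:sider3_refined} playing the role of Proposition~\ref{prop:sider2_refined}. Fix \(\theta\in[0,4]\) and set \(y=\gamma(\theta h)\). All errors are measured in \(T_y\bS^2\) through \(\Log_y\).

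The SIDER4 recursion \eqref{eq:sider_recursion_background} with \(k=4\), \(i=0\) reads
\[
P_0^{[4]}(\theta;h)=\SLERP\!\left(P_0^{[3]}(\theta;h),P_1^{[3]}(\theta;h),\frac{\theta}{4}\right).
\]
Define \(E_i=\Log_y P_i^{[3]}(\theta;h)\) for \(i=0,1\).

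\textbf{Step 1: expand both SIDER3 candidates.} Apply Proposition~\ref{prop:sider3_refined} to the left stencil (\(i=0\)) and to the right stencil (\(i=1\)). Both use the same normal-coordinate system centred at \(y\), and \(\theta\) ranges over the bounded interval \([0,4]\). This gives
\[
E_0=-\frac{h^4}{72}\Pi_0^{[3]}(\theta)K(\theta)+O(h^5),\qquad
E_1=-\frac{h^4}{72}\Pi_1^{[3]}(\theta)K(\theta)+O(h^5).
\]
The key point is that the vector \(K(\theta)\), defined in \eqref{eq:K_vector}, depends only on the Taylor data \(V,A,D\) of \(X\) at \(\eta=0\). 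It does not depend on the stencil shift. I need to state explicitly that the remainder in Proposition~\ref{prop:sider3_refined} is uniform when it is applied on the extrapolated range. For the right stencil, \(\xi=\theta-1\in[-1,3]\). This is covered because Propositions~\ref{prop:sider2_fourth_refined} and~\ref{prop:sider3_refined} are stated uniformly on bounded intervals. I also need \(\gamma\in C^5\), which matches the hypothesis of the theorem.

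\textbf{Step 2: linearize the final SLERP.} Both candidates satisfy \(E_0,E_1=O(h^4)\), so Lemma~\ref{lem:affine_stability} applies with \(m=4\). The non-affine part of the final SLERP is then \(O(h^{12})\), and
\[
\Log_y P_0^{[4]}(\theta;h)=\Bigl(1-\frac{\theta}{4}\Bigr)E_0+\frac{\theta}{4}E_1+O(h^{12}).
\]

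\textbf{Step 3: cancel the \(h^4\) term.} Substituting Step 1 into Step 2, the \(h^4\) coefficient is \(-\tfrac{1}{72}K(\theta)\) times
\[
\Bigl(1-\frac{\theta}{4}\Bigr)\theta(\theta-1)(\theta-2)(\theta-3)
+\frac{\theta}{4}(\theta-1)(\theta-2)(\theta-3)(\theta-4).
\]
Factor out \((\theta-1)(\theta-2)(\theta-3)\). What remains is \(\theta\bigl[(1-\theta/4)+(\theta-4)/4\bigr]=0\), so the bracket vanishes identically. This is the Neville cancellation. Hence \(\Log_y P_0^{[4]}=O(h^5)\).

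Since \(P_0^{[4]}\) lies in the normal neighbourhood of \(y\) by the convex-ball assumption, the geodesic distance equals the norm of this logarithm, which gives the \(O(h^5)\) bound uniformly in \(\theta\).

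\textbf{Main obstacle.} Steps 2 and 3 are mechanical. The real content is the coefficient compatibility in Step 1, and that is supplied by Proposition~\ref{prop:sider3_refined}. The remaining care is in checking that the \(O(h^5)\) remainders there are uniform over the extrapolated range \(\xi\in[-1,3]\). If that uniformity needed more justification, I would go back to the proof of Proposition~\ref{prop:sider2_fourth_refined}. There, each SLERP remainder is controlled by Lemma~\ref{lem:high_order_slerp}, with \(\lambda\) ranging over a bounded set, so the bounds hold uniformly.
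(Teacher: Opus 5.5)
Your proposal is correct and follows essentially the same route as the paper's proof. You expand the two SIDER3 candidates with Proposition~\ref{prop:sider3_refined}, which gives a common vector $K(\theta)$ and the shifted nodal factors $\Pi_0^{[3]},\Pi_1^{[3]}$; you linearize the final SLERP via Lemma~\ref{lem:affine_stability}; the Neville weight $\theta/4$ then annihilates the $h^4$ term, and you pass from $|\Log_y P_0^{[4]}|$ to geodesic distance. The only differences are cosmetic: you record the sharper $O(h^{12})$ where the paper writes $O(h^5)$, and you make explicit the uniformity over the extrapolated range $\theta-1\in[-1,3]$, which the paper leaves to its ``bounded intervals'' clause.
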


\begin{proof}
Set
\[
        y=\gamma(\theta h).
\]
The SIDER4 recursion is
\[
        P_0^{[4]}(\theta;h)
        =
        \SLERP\left(
        P_0^{[3]}(\theta;h),
        P_1^{[3]}(\theta;h),
        \frac{\theta}{4}
        \right).
\]
By Proposition~\ref{prop:sider3_refined}, both SIDER3 candidates are
\(O(h^4)\)-close to \(y\).  Hence Lemma~\ref{lem:affine_stability} gives
\[
\Log_yP_0^{[4]}(\theta;h)
=
\left(1-\frac{\theta}{4}\right)
\Log_yP_0^{[3]}(\theta;h)
+
\frac{\theta}{4}
\Log_yP_1^{[3]}(\theta;h)
+
O(h^5).
\]
Using \eqref{eq:sider3_refined} for \(i=0\) and \(i=1\), the leading
\(h^4\) term is
\[
-\frac{h^4}{72}K(\theta)
\left[
\left(1-\frac{\theta}{4}\right)
\prod_{j=0}^{3}(\theta-j)
+
\frac{\theta}{4}
\prod_{j=1}^{4}(\theta-j)
\right].
\]
The scalar factor vanishes identically:
\[
\left(1-\frac{\theta}{4}\right)
\theta(\theta-1)(\theta-2)(\theta-3)
+
\frac{\theta}{4}
(\theta-1)(\theta-2)(\theta-3)(\theta-4)
=0.
\]
Therefore
\[
        \Log_yP_0^{[4]}(\theta;h)=O(h^5).
\]
Since all points remain in a common normal neighborhood,
\[
        d_{\bS^2}(y,z)=|\Log_yz|,
\]
and the result follows with \(z=P_0^{[4]}(\theta;h)\).
\end{proof}

Theorem~\ref{thm:sider4} shows that the coefficient-compatibility condition
does persist through the next SIDER level.  The calculation also identifies
what must be controlled in any further extension.  At the SIDER4 level, the
leading coefficient is no longer just the fourth normal-coordinate derivative
of the curve; it also contains the curvature-dependent term
\[
        |V|^2A-\langle V,A\rangle V.
\]
For still higher orders, analogous curvature-dependent combinations of the
normal-coordinate Taylor coefficients will appear.  An all-order proof would
therefore require an induction showing that these increasingly complicated
coefficients remain shift-invariant at every recursive level.

\section{Higher-order SIDER recurrence and coefficient compatibility}
\label{sec:higher_order_recurrence}

The preceding sections establish the first three nontrivial accuracy results
of the SIDER construction: third order for SIDER2, fourth order for SIDER3,
and fifth order for SIDER4.  These proofs also reveal the mechanism by which
higher order may be obtained: two adjacent lower-order reconstructions are
combined by a SLERP operation with the same weights that appear in the equally
spaced Neville recursion.  In
Euclidean interpolation, this recursion automatically increases the order
because the leading nodal error polynomials cancel exactly.  It is therefore
natural to ask whether the same argument immediately proves that SIDER-\(n\)
has local accuracy \(O(h^{n+1})\) for every \(n\).

The answer is more subtle on \(\bS^2\).  The algebraic cancellation of nodal
factors remains present, but the spherical construction contains additional
geometric terms coming from the nonlinear dependence of SLERP on its endpoints.
Consequently, a root-counting or nodal-factor argument alone is not sufficient
to prove an unconditional all-order theorem.  To obtain the next order by
recursion, the leading error coefficients of the two adjacent lower-order
SIDER reconstructions must agree.  The purpose of this section is to isolate
this requirement precisely.  We first derive the exact recurrence calculation
for a general SIDER step.  We then explain how the SIDER3 theorem fits into
this framework, and finally state a conditional higher-order result that
identifies the additional coefficient compatibility needed for an induction.

For \(k\geq2\), let \(P_i^{[k]}(\theta;h)\) denote the SIDER-\(k\)
interpolant constructed from the stencil
\[
        p_i,p_{i+1},\ldots,p_{i+k},
        \qquad p_j=\gamma(jh),
\]
and evaluated at the physical parameter
\[
        s=\theta h.
\]
The index \(i\) indicates the left endpoint of the stencil, while \(k\)
indicates the degree-like level of the reconstruction.  Thus \(P_i^{[2]}\) is
a SIDER2 interpolant constructed from three points, and \(P_i^{[3]}\) is a
SIDER3 interpolant constructed from four points.  For \(k\geq3\), the
recursive part of the SIDER construction can be written as
\begin{equation}
\label{eq:sider_general_rec}
P_i^{[k]}(\theta;h)
=
\SLERP\!\left(
        P_i^{[k-1]}(\theta;h),
        P_{i+1}^{[k-1]}(\theta;h),
        \frac{\theta-i}{k}
       \right).
\end{equation}
The case \(k=2\) is the direct SIDER2 construction
\eqref{eq:sider2}, which uses extrapolated spherical control points and is not
obtained from \eqref{eq:sider_general_rec}.

The following lemma is the basic recurrence calculation.  It shows exactly
which part of the Euclidean Neville cancellation survives automatically and
which part requires an additional geometric compatibility condition.

\begin{lemma}[General recurrence calculation]
\label{lem:recurrence_calculation}
Fix \(k\geq3\), put
\[
        y=\gamma(\theta h),
\]
and suppose that two adjacent SIDER-\((k-1)\) candidates satisfy the local
expansions
\begin{align}
\Log_y P_i^{[k-1]}(\theta;h)
&=
h^k A_i(\theta)
        \prod_{j=i}^{i+k-1}(\theta-j)
+
O(h^{k+1}),                                           \label{eq:Ai}\\
\Log_y P_{i+1}^{[k-1]}(\theta;h)
&=
h^k A_{i+1}(\theta)
        \prod_{j=i+1}^{i+k}(\theta-j)
+
O(h^{k+1}),                                           \label{eq:Aip1}
\end{align}
where \(A_i(\theta),A_{i+1}(\theta)\in T_y\bS^2\) are bounded vector
coefficients.  Then the SIDER-\(k\) reconstruction defined by
\eqref{eq:sider_general_rec} satisfies
\begin{align}
\Log_y P_i^{[k]}(\theta;h)
&=
h^k
\frac{k-(\theta-i)}{k}
(\theta-i)
\prod_{j=i+1}^{i+k-1}(\theta-j)
\bigl(A_i(\theta)-A_{i+1}(\theta)\bigr)       \notag\\
&\qquad +O(h^{k+1}).                           \label{eq:recurrence_gap}
\end{align}
Consequently, the \(O(h^k)\) term cancels if
\[
        A_i(\theta)=A_{i+1}(\theta).
\]
If this equality is not known, the recurrence calculation by itself gives
only an \(O(h^k)\) estimate.
\end{lemma}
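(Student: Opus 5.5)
The plan is to mirror the SIDER3 and SIDER4 arguments, with the two ingredients cleanly separated: first linearize the final SLERP using Lemma~\ref{lem:affine_stability}, then perform the Neville-type nodal algebra exactly. Throughout, fix \(\theta\) in a bounded interval, set \(y=\gamma(\theta h)\) and \(\xi=\theta-i\), and write
\[
E_i=\Log_y P_i^{[k-1]}(\theta;h),\qquad E_{i+1}=\Log_y P_{i+1}^{[k-1]}(\theta;h),
\]
so that, by \eqref{eq:sider_general_rec}, \(P_i^{[k]}(\theta;h)=\SLERP(\Exp_y E_i,\Exp_y E_{i+1},\xi/k)\).

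\textbf{Step 1 (linearize the final SLERP).} By hypotheses \eqref{eq:Ai}--\eqref{eq:Aip1}, and because \(A_i,A_{i+1}\) and the nodal products are bounded for bounded \(\theta\), we have \(E_i,E_{i+1}=O(h^k)\). The weight \(\xi/k\) is bounded. Lemma~\ref{lem:affine_stability} with \(m=k\geq3\) then gives
\[
\Log_y P_i^{[k]}(\theta;h)=\Bigl(1-\tfrac{\xi}{k}\Bigr)E_i+\tfrac{\xi}{k}E_{i+1}+O(h^{3k}).
\]
Since \(3k\geq k+1\), the geometric correction is absorbed into \(O(h^{k+1})\). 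The \(O(h^{k+1})\) remainders in \eqref{eq:Ai}--\eqref{eq:Aip1}, multiplied by the bounded weights, stay \(O(h^{k+1})\). Assumption~\ref{assump:local_stencil} ensures all points lie in the normal neighborhood where the lemma applies.

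\textbf{Step 2 (nodal algebra).} Factor out the common product \(Q(\theta)=\prod_{j=i+1}^{i+k-1}(\theta-j)\). The left product is then \(\xi\,Q\) and the right product is \((\theta-i-k)Q=(\xi-k)Q\). The \(h^k\) coefficient becomes
\[
\Bigl(1-\tfrac{\xi}{k}\Bigr)\xi Q\,A_i+\tfrac{\xi}{k}(\xi-k)Q\,A_{i+1}
=\frac{k-\xi}{k}\,\xi\,Q\,\bigl(A_i-A_{i+1}\bigr),
\]
using \(\tfrac{\xi}{k}(\xi-k)=-\bigl(1-\tfrac{\xi}{k}\bigr)\xi\). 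This is exactly \eqref{eq:recurrence_gap}.

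\textbf{Step 3 (consequences).} If \(A_i=A_{i+1}\), the \(h^k\) term vanishes and only \(O(h^{k+1})\) remains. Otherwise the prefactor \(\frac{k-\xi}{k}\xi Q\) is a nonzero polynomial in \(\theta\), so the bound is generically only \(O(h^k)\).

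I expect no real obstacle. The only point needing care is Step 1: one must check that Lemma~\ref{lem:affine_stability} applies with constants uniform in bounded \(\theta\), which follows from the boundedness of the weight \(\xi/k\) and of the coefficients. Step 2 is a one-line factorization.
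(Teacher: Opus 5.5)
Your proposal is correct and follows the paper's proof essentially line by line. Like the paper, you linearize the final SLERP with Lemma~\ref{lem:affine_stability}, absorbing the \(O(h^{3k})\) correction into \(O(h^{k+1})\). You then factor out the common product \(\prod_{m=1}^{k-1}(\xi-m)\) and use \(\frac{\xi}{k}(\xi-k)=-\frac{k-\xi}{k}\,\xi\) to arrive at \eqref{eq:recurrence_gap}.
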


\begin{proof}
Let
\[
        E_i=\Log_y P_i^{[k-1]}(\theta;h),
        \qquad
        E_{i+1}=\Log_y P_{i+1}^{[k-1]}(\theta;h),
\]
and define
\[
        \lambda=\frac{\theta-i}{k}.
\]
By assumption, both \(E_i\) and \(E_{i+1}\) are \(O(h^k)\).  Since \(k\geq3\),
the endpoints of the final SLERP operation in
\eqref{eq:sider_general_rec} are high-order perturbations of the same point
\(y=\gamma(\theta h)\).  Lemma~\ref{lem:affine_stability} therefore gives
\[
\Log_y P_i^{[k]}(\theta;h)
=
(1-\lambda)E_i+\lambda E_{i+1}+O(h^{3k}).
\]
Because \(3k\geq k+1\), the term \(O(h^{3k})\) is contained in
\(O(h^{k+1})\).

Substituting \eqref{eq:Ai}--\eqref{eq:Aip1} yields
\[
\begin{aligned}
\Log_y P_i^{[k]}(\theta;h)
&=
h^k
\left[
\left(1-\frac{\theta-i}{k}\right)A_i(\theta)
\prod_{j=i}^{i+k-1}(\theta-j)
\right.  \\
&\qquad\qquad\left.
+
\frac{\theta-i}{k}A_{i+1}(\theta)
\prod_{j=i+1}^{i+k}(\theta-j)
\right]
+O(h^{k+1}).
\end{aligned}
\]
Set
\[
        \xi=\theta-i .
\]
Then
\[
        \prod_{j=i}^{i+k-1}(\theta-j)
        =
        \xi\prod_{m=1}^{k-1}(\xi-m),
\]
while
\[
        \prod_{j=i+1}^{i+k}(\theta-j)
        =
        (\xi-k)\prod_{m=1}^{k-1}(\xi-m).
\]
Therefore the leading term becomes
\[
h^k
\prod_{m=1}^{k-1}(\xi-m)
\left[
\left(1-\frac{\xi}{k}\right)\xi A_i(\theta)
+
\frac{\xi}{k}(\xi-k)A_{i+1}(\theta)
\right].
\]
Since
\[
        1-\frac{\xi}{k}=\frac{k-\xi}{k}
        \qquad\text{and}\qquad
        \frac{\xi}{k}(\xi-k)
        =
        -\frac{\xi(k-\xi)}{k},
\]
this expression is
\[
h^k
\frac{k-\xi}{k}\xi
\prod_{m=1}^{k-1}(\xi-m)
\bigl(A_i(\theta)-A_{i+1}(\theta)\bigr).
\]
Returning to \(\xi=\theta-i\) gives \eqref{eq:recurrence_gap}.
\end{proof}

Lemma~\ref{lem:recurrence_calculation} shows that the recurrence has two
separate components.  The scalar nodal factors behave exactly as in Euclidean
Neville interpolation.  In particular, if the same vector coefficient
multiplies the two adjacent nodal polynomials, then the leading term cancels.
However, on the sphere, the vector coefficients are generated by nonlinear
SLERP compositions and may, in principle, depend on the stencil shift.  The
recurrence formula itself does not rule out such dependence.

\begin{remark}[Why nodal factors alone are not enough]
\label{rem:root_counting_insufficient}
The interpolation property implies that the leading error of a shifted
SIDER-\((k-1)\) candidate must vanish at the nodes of its own stencil.  This
explains why an expansion of the form
\[
\Log_y P_i^{[k-1]}(\theta;h)
=
h^k A_i(\theta)
\prod_{j=i}^{i+k-1}(\theta-j)
+
O(h^{k+1})
\]
is natural.  Nevertheless, the nodal factor alone does not determine the
coefficient \(A_i(\theta)\).  Two adjacent stencils may have the same type of
nodal factor but different leading vector coefficients.  If
\(A_i(\theta)\neq A_{i+1}(\theta)\), then the leading term in
\eqref{eq:recurrence_gap} generally remains nonzero, and the recursion does
not increase the order.  Thus a root-counting argument proves the location of
zeros of the leading error polynomial, but it does not prove the
coefficient identity required for cancellation.
\end{remark}

The SIDER3 theorem is the first nontrivial instance of
Lemma~\ref{lem:recurrence_calculation}.  In that case \(k=3\), and the two
lower-order candidates are the SIDER2 interpolants associated with the stencils
\[
        \{p_0,p_1,p_2\}
        \qquad\text{and}\qquad
        \{p_1,p_2,p_3\}.
\]
Proposition~\ref{prop:sider2_refined} gives the refined expansions
\[
\Log_{\gamma(\theta h)}P_0^{[2]}(\theta;h)
=
-\frac{h^3}{6}B(\theta)
\theta(\theta-1)(\theta-2)
+
O(h^4),
\]
and
\[
\Log_{\gamma(\theta h)}P_1^{[2]}(\theta;h)
=
-\frac{h^3}{6}B(\theta)
(\theta-1)(\theta-2)(\theta-3)
+
O(h^4).
\]
Hence
\[
        A_0(\theta)=A_1(\theta)=-\frac16 B(\theta),
\]
and the \(O(h^3)\) term in \eqref{eq:recurrence_gap} vanishes.  This is
exactly the cancellation proved in Theorem~\ref{thm:sider3}.  The fourth-order
accuracy of SIDER3 is therefore not a consequence merely of the interpolation
property of SIDER2; it relies on the stronger fact that the two adjacent
SIDER2 leading coefficients coincide when expressed in the same
normal-coordinate system.

For higher-order SIDER constructions, the preceding discussion leads to the
following precise compatibility condition.

\begin{assump}[Shift-invariant leading coefficient]
\label{assump:shift_invariant}
Let \(m\geq2\).  We say that the SIDER-\(m\) level has a shift-invariant
leading coefficient if, for every stencil shift \(i\), the corresponding
SIDER-\(m\) reconstruction satisfies
\[
\Log_{\gamma(\theta h)}P_i^{[m]}(\theta;h)
=
h^{m+1}A_m(\theta)
\prod_{j=i}^{i+m}(\theta-j)
+
O(h^{m+2}),
\]
uniformly for \(\theta\) in bounded intervals, where the vector
\(A_m(\theta)\in T_{\gamma(\theta h)}\bS^2\) is independent of the stencil
shift \(i\).
\end{assump}

This assumption is automatically satisfied at level \(m=2\) by
Proposition~\ref{prop:sider2_refined}.  Establishing it at higher levels is a
stronger task than proving an error bound.  It requires tracking the leading
coefficient through the SLERP expansion and showing that all stencil-dependent
geometric contributions either cancel or enter only at higher order.

\begin{thm}[Conditional higher-order recurrence]
\label{thm:conditional_all_order}
Let \(n\geq3\) be fixed.  Suppose that \(\gamma\) is sufficiently smooth and
that all points generated by the SIDER construction lie in a common
geodesically convex ball of radius \(O(h)\).  If the shift-invariant leading
coefficient condition in Assumption~\ref{assump:shift_invariant} holds at
level \(m=n-1\), then the SIDER-\(n\) interpolant satisfies
\[
        d_{\bS^2}\!\left(\gamma(\theta h),P_0^{[n]}(\theta;h)\right)
        \leq C_n h^{n+1},
        \qquad 0\leq\theta\leq n,
\]
for a constant \(C_n\) independent of \(h\).
\end{thm}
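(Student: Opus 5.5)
The plan is to apply Lemma~\ref{lem:recurrence_calculation} with $k=n$ and $i=0$, feeding it the hypothesis supplied by Assumption~\ref{assump:shift_invariant} at level $m=n-1$. Fix $\theta\in[0,n]$ and set $y=\gamma(\theta h)$. The assumption gives, for the two adjacent shifts $i=0$ and $i=1$,
\[
\Log_y P_i^{[n-1]}(\theta;h)=h^{n}A_{n-1}(\theta)\prod_{j=i}^{i+n-1}(\theta-j)+O(h^{n+1}),
\]
with the same vector $A_{n-1}(\theta)$ for both shifts. This is exactly the pair of expansions \eqref{eq:Ai}--\eqref{eq:Aip1} with $A_0(\theta)=A_1(\theta)=A_{n-1}(\theta)$. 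Before invoking the lemma I will check that $\theta\in[0,n]$ lies in a bounded interval on which the uniformity in Assumption~\ref{assump:shift_invariant} holds. The right candidate $P_1^{[n-1]}$ is evaluated on $[0,n]$, which extrapolates slightly beyond its own stencil $[1,n]$; this is covered because the assumption is stated uniformly on bounded intervals.

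Lemma~\ref{lem:recurrence_calculation} then yields
\[
\Log_y P_0^{[n]}(\theta;h)=h^n\frac{n-\theta}{n}\,\theta\prod_{j=1}^{n-1}(\theta-j)\bigl(A_0(\theta)-A_1(\theta)\bigr)+O(h^{n+1})=O(h^{n+1}).
\]
The requirement $n\geq3$ matters here: both endpoints of the final SLERP are $O(h^{n})$ with $n\geq 3>2$, so Lemma~\ref{lem:affine_stability} applies inside Lemma~\ref{lem:recurrence_calculation} and the nonlinear part is $O(h^{3n})\subset O(h^{n+1})$. To finish, I will use that under Assumption~\ref{assump:local_stencil} the point $P_0^{[n]}(\theta;h)$ lies in the normal neighborhood of $y$. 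Hence $d_{\bS^2}(y,P_0^{[n]})=|\Log_yP_0^{[n]}|$, which gives the bound $C_nh^{n+1}$.

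The main point requiring care is uniformity of the constant $C_n$. The $O(h^{n+1})$ remainders in the hypothesis, and the remainder in Lemma~\ref{lem:affine_stability}, must be bounded independently of $\theta\in[0,n]$ and of $y$. I will get this by noting three things. First, the assumption is uniform for $\theta$ in bounded intervals. Second, the SLERP expansion constants in Lemma~\ref{lem:high_order_slerp} are uniform over the compact set of base points $\gamma([0,nh])$ and over bounded $\lambda=\theta/n\in[0,1]$. Third, $A_{n-1}(\theta)$ is bounded, so the leading terms are $O(h^n)$ uniformly. This uniformity is what lets the endpoint-size hypothesis of Lemma~\ref{lem:affine_stability} hold with a single constant. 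I will take $C_n$ as the sum of these uniform constants.
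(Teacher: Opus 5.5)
Your proposal is correct and follows the paper's proof essentially step for step. You apply Lemma~\ref{lem:recurrence_calculation} with $k=n$, $i=0$, use the common coefficient $A_{n-1}(\theta)$ from Assumption~\ref{assump:shift_invariant} to cancel the $O(h^n)$ term, and convert $|\Log_y P_0^{[n]}|$ into geodesic distance; your extra care about uniformity only fills in detail the paper leaves implicit.

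One small quibble: the restriction $n\ge 3$ is needed because the recursion \eqref{eq:sider_general_rec} only defines SIDER-$k$ for $k\ge 3$, not to make Lemma~\ref{lem:affine_stability} applicable, since that lemma already holds for $m\ge 2$.
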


\begin{proof}
Apply Lemma~\ref{lem:recurrence_calculation} with \(k=n\) and \(i=0\).
Assumption~\ref{assump:shift_invariant} at level \(m=n-1\) gives
\[
\Log_{\gamma(\theta h)}P_0^{[n-1]}(\theta;h)
=
h^n A_{n-1}(\theta)
\prod_{j=0}^{n-1}(\theta-j)
+
O(h^{n+1}),
\]
and
\[
\Log_{\gamma(\theta h)}P_1^{[n-1]}(\theta;h)
=
h^n A_{n-1}(\theta)
\prod_{j=1}^{n}(\theta-j)
+
O(h^{n+1}).
\]
The two leading coefficients are identical.  Therefore the leading
\(O(h^n)\) term in \eqref{eq:recurrence_gap} vanishes, and
\[
        \Log_{\gamma(\theta h)}P_0^{[n]}(\theta;h)
        =
        O(h^{n+1}).
\]
Since all points remain in a common normal neighborhood,
\[
        d_{\bS^2}(y,z)=|\Log_y z|
\]
for the points under consideration.  Taking
\(y=\gamma(\theta h)\) and \(z=P_0^{[n]}(\theta;h)\) gives the desired
estimate.
\end{proof}

\begin{remark}
Theorem~\ref{thm:conditional_all_order} is a recurrence statement: it
identifies the coefficient identity under which the SIDER recursion raises the
order from \(n\) to \(n+1\).  The explicit SIDER2 and SIDER3 calculations prove
this identity at the levels needed for SIDER3 and SIDER4.  In
Section~\ref{sec:degree_filter}, we prove a stronger degree-filtered closure
property for the local SLERP map.  That result supplies the coefficient
compatibility at every fixed level and leads to the all-order consistency
theorem.
\end{remark}

\begin{remark}
The distinction made in this section is important for avoiding an overly
formal induction argument.  In Euclidean interpolation, the leading coefficient
is automatically independent of the stencil shift at the order relevant to
Neville cancellation.  For SIDER interpolation, this compatibility must be
proved because the reconstruction is formed through nonlinear geodesic
operations.  The degree-filtered analysis in
Section~\ref{sec:degree_filter} provides this verification.
\end{remark}

\section{A degree-filtered proof of the all-order mechanism}
\label{sec:degree_filter}

The preceding sections prove the expected orders for SIDER2, SIDER3, and
SIDER4 by explicit Taylor expansion.  These calculations suggest a more
structural explanation: the normal-coordinate error of a SIDER reconstruction
has a filtered polynomial dependence on the shifted stencil coordinate
\[
        \xi=\theta-i .
\]
The purpose of this section is to make this observation precise and to prove
the closure statement that was only implicit in the lower-order calculations.
The result is an all-order local consistency theorem for every fixed SIDER
level.

The main idea is simple.  If the coefficient of \(h^r\) in the local error is
a polynomial in \(\xi\) of degree at most \(r\), then interpolation at the
\(n+1\) nodes of a SIDER-\(n\) stencil forces all coefficients through order
\(h^n\) to vanish.  The leading nonzero term must then be a multiple of the
nodal polynomial.  Thus the problem reduces to proving that the SIDER
construction preserves this degree filtration.  The nontrivial part is the
SLERP operation, because its normal-coordinate expansion contains nonlinear
curvature terms.  We prove below that these terms preserve the filtration once
they are written in terms of an endpoint and an endpoint difference.

Fix a base point
\[
        y=\gamma(\theta h)
\]
and write all quantities in the tangent space \(T_y\bS^2\).  Throughout this
section, all formal expansions are understood through an arbitrary but fixed
order \(N\), with remainders uniform for \(\xi\) in bounded intervals.

\subsection{Filtered series and finite differences}

Let \(\mathcal P_N\) denote the class of vector-valued formal expansions
\[
        U(\xi,h)=\sum_{r=0}^{N}h^r U_r(\xi)+O(h^{N+1}),
\]
where each coefficient \(U_r(\xi)\) is a polynomial in \(\xi\) of degree at
most \(r\).  We also define \(\mathcal Q_N\) to be the class of expansions
\[
        D(\xi,h)=\sum_{r=0}^{N}h^r D_r(\xi)+O(h^{N+1})
\]
such that \(D_0=0\) and \(\deg D_r\le r-1\) for \(r\ge1\).  Thus
\(\mathcal Q_N\) is one degree lower than \(\mathcal P_N\).  If
\(U,W\in\mathcal P_N\) and \(W-U\in\mathcal Q_N\), we say that \(U\) and
\(W\) are a compatible pair.

The following elementary observation will be used repeatedly.

\begin{lemma}[Finite differences lower the degree]
\label{lem:finite_difference_degree}
If \(U\in\mathcal P_N\), then for any fixed constant \(a\),
\[
        U(\xi-a,h)-U(\xi,h)\in\mathcal Q_N .
\]
\end{lemma}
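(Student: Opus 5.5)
The argument works coefficient by coefficient in \(h\). Write \(U(\xi,h)=\sum_{r=0}^{N}h^rU_r(\xi)+O(h^{N+1})\) with \(\deg U_r\le r\), and note that the shifted expansion \(U(\xi-a,h)\) is again a formal expansion through order \(N\) with coefficients \(U_r(\xi-a)\). The remainder stays \(O(h^{N+1})\) uniformly for \(\xi\) in bounded intervals, because \(a\) is fixed and a bounded interval shifted by \(a\) is still bounded. Hence
\[
        U(\xi-a,h)-U(\xi,h)=\sum_{r=0}^{N}h^r\bigl(U_r(\xi-a)-U_r(\xi)\bigr)+O(h^{N+1}),
\]
and it suffices to examine each coefficient \(D_r(\xi)=U_r(\xi-a)-U_r(\xi)\) separately.

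For \(r=0\), the condition \(\deg U_0\le 0\) means \(U_0\) is a constant vector in \(T_y\bS^2\). Therefore \(D_0=U_0-U_0=0\), which is exactly the first requirement for membership in \(\mathcal Q_N\).

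For \(r\ge1\), expand \(U_r(\xi)=\sum_{\ell=0}^{r}c_{r,\ell}\,\xi^\ell\) with vector coefficients \(c_{r,\ell}\). By the binomial theorem, \((\xi-a)^\ell-\xi^\ell\) has degree at most \(\ell-1\): its leading \(\xi^\ell\) terms cancel and leave \(-\ell a\,\xi^{\ell-1}\) plus lower-order terms. Summing over \(\ell\le r\) gives \(\deg D_r\le r-1\). (Equivalently, by the mean value form, \(D_r(\xi)=-a\int_0^1U_r'(\xi-ta)\,dt\), which is a polynomial of degree at most \(r-1\).) This is the second requirement, so \(U(\xi-a,h)-U(\xi,h)\in\mathcal Q_N\).

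The only point needing any care is the remainder. Membership in \(\mathcal P_N\) is understood with uniform remainder on bounded \(\xi\)-intervals, and that uniformity is what lets us shift the argument by \(a\) without losing control of the \(O(h^{N+1})\) term. Everything else is elementary polynomial algebra, so I expect no genuine obstacle.
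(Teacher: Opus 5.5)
Your proof is correct and follows the paper's argument: both work coefficient by coefficient in \(h\) and observe that the top-degree \(\xi^r\) terms of \(U_r(\xi-a)\) and \(U_r(\xi)\) cancel, so each difference has degree at most \(r-1\). Your explicit checks that \(D_0=0\) and that the shifted remainder stays uniformly \(O(h^{N+1})\) spell out details the paper leaves implicit.
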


\begin{proof}
For each coefficient \(U_r\), the difference
\(U_r(\xi-a)-U_r(\xi)\) has degree at most \(r-1\), since the leading
\(\xi^r\) terms cancel.  Applying this coefficient by coefficient gives the
claim.
\end{proof}

The exact sampled curve also has a filtered expansion.  If
\[
        X_j(\xi,h)
        =
        \Log_{\gamma(\theta h)}\gamma((i+j)h),
        \qquad \xi=\theta-i,
\]
then Taylor expansion in normal coordinates gives
\[
        X_j(\xi,h)
        =
        \sum_{r=1}^{N}
        \frac{h^r}{r!}(j-\xi)^r X^{(r)}(0)
        +O(h^{N+1}),
\]
and hence \(X_j\in\mathcal P_N\).  Moreover, \(X_j-X_\ell\in\mathcal Q_N\)
for any fixed \(j,\ell\), by the same finite-difference argument.

\subsection{A difference-filtered expansion of SLERP}

We now prove the closure property for SLERP.  Let
\[
G_y(U,W,\lambda)
=
\Log_y\!\left(\SLERP(\Exp_yU,\Exp_yW,\lambda)\right).
\]
It is convenient to write \(W=U+D\) and view \(D\) as the endpoint difference.

\begin{lemma}[Difference-filtered SLERP expansion]
\label{lem:difference_filtered_slerp}
For every integer \(M\ge1\), the local SLERP map has a formal expansion
\begin{equation}
\label{eq:difference_filtered_slerp}
G_y(U,U+D,\lambda)
=
U+\lambda D
+
\sum_{\ell=2}^{M}
\sum_{b=1}^{\ell}
\sum_{q=0}^{b}
\lambda^q
\mathcal H_{\ell,b,q}(y;U,D)
+
O\bigl((|U|+|D|)^{M+1}\bigr),
\end{equation}
where \(\mathcal H_{\ell,b,q}\) is homogeneous of total degree \(\ell\) in
\((U,D)\) and contains exactly \(b\) factors of \(D\).  In particular, in every
monomial the power of \(\lambda\) is no larger than the number of endpoint
difference factors.
\end{lemma}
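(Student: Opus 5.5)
The plan is to avoid expanding the sine-ratio formula directly and to factor SLERP through the exponential map at the first endpoint, as in Section~2:
\[
        \SLERP(A,B,\lambda)=\Exp_A\bigl(\lambda\,\Log_A B\bigr),
        \qquad A=\Exp_yU,\quad B=\Exp_y(U+D).
\]
In this form the parameter \(\lambda\) enters only as a scalar multiple of the single vector \(w=\Log_AB\). That vector vanishes identically when \(D=0\). The whole lemma should follow from these two facts and Taylor's theorem.

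\emph{Step 1 (the increment vector).} I would regard \(T_A\bS^2\) as a subspace of \(\bR^3\) and set
\[
        w(U,D)=\Log_{\Exp_yU}\Exp_y(U+D)\in\bR^3 .
\]
Under Assumption~\ref{assump:local_stencil}, formulas \eqref{eq:exp_sphere} and \eqref{eq:log_sphere} show that \(w\) is a smooth, in fact real-analytic, function of \((U,D)\) near \((0,0)\). Here the factor \(\alpha/\sin\alpha\) is an even analytic function of \(\alpha\), hence of \(\alpha^2\). Since \(w(U,0)=0\), Hadamard's lemma gives
\[
        w=\int_0^1\partial_Dw(U,tD)\,dt\;D .
\]
Consequently every monomial in the Taylor expansion of \(w\) in \((U,D)\) contains at least one factor of \(D\). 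Its linear part is \(D\), because \(\partial_Dw(0,0)=\mathrm{Id}\).

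\emph{Step 2 (the outer map).} Next define
\[
        \Phi(U,v)=\Log_y\Exp_{\Exp_yU}(v),
\]
for \(v\) tangent at \(\Exp_yU\). To have an honest smooth function on an open set, I would extend \(v\) to \(\bR^3\) by first projecting it onto \(T_{\Exp_yU}\bS^2\). Then \(\Phi\) is smooth, \(\Phi(U,0)=U\) exactly, and \(\partial_v\Phi(0,0)\) is the identity on \(T_y\bS^2\). We have
\[
        G_y(U,U+D,\lambda)=\Phi\bigl(U,\lambda w(U,D)\bigr).
\]
I would Taylor expand \(\Phi\) in \(v\) around \(v=0\), with coefficients smooth in \(U\), and then expand those coefficients in \(U\). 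A term of degree \(q\) in \(v\) becomes \(\lambda^q\) times a \(q\)-linear expression in \(w\). By Step~1, each factor of \(w\) carries at least one \(D\), so such a term has \(b\ge q\) factors of \(D\). This is precisely the constraint \(q\le b\) in \eqref{eq:difference_filtered_slerp}. The terms with \(q=0\) sum to \(\Phi(U,0)=U\), with no higher-order part. Hence every correction of total degree \(\ell\ge2\) has \(b\ge1\). The degree-one part is \(U+\lambda D\), using the two identity derivatives. Finally, I would collect the monomials by total degree \(\ell\) and by \(D\)-count \(b\) to define \(\mathcal H_{\ell,b,q}\). The dependence on \(\lambda\) is visibly polynomial, of degree \(\le b\).

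\emph{Step 3 (remainder and uniformity).} To control the remainder I would truncate the composite expansion at total degree \(M\). For \(|U|+|D|\le\rho_0\) and \(\lambda\) in a bounded interval, \(|\lambda w|\lesssim|D|\). The Taylor remainders of \(\Phi\) and \(w\) are then bounded by \(C_M(|U|+|D|)^{M+1}\), uniformly in \(\lambda\). The constant is also uniform in \(y\), by rotation invariance or by compactness, exactly as in Lemma~\ref{lem:high_order_slerp}.

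The step I expect to need the most care is Step~2's extension of \(\Phi\) to a smooth function of \((U,v)\in T_y\bS^2\times\bR^3\). The tangent space \(T_{\Exp_yU}\bS^2\) moves with \(U\), and the Taylor expansion of \(\Phi\) in \(v\) must not reintroduce terms that are independent of \(D\). The orthogonal-projection extension handles this. Under it, \(w\) is already tangent, so the projection does not change the value of \(\Phi(U,\lambda w)\), and \(\Phi(U,0)=U\) still holds identically. As a consistency check, I would expand \(\mathcal C_\lambda(U,U+D)\) from Lemma~\ref{lem:cubic_slerp} and confirm that its pure-\(U\) part cancels and that \(\lambda^q\) appears only with at least \(q\) factors of \(D\).
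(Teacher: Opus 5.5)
Your proposal is correct, but it argues differently from the paper.

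The paper stays with the sine formula. It writes $\SLERP(A,B,\lambda)=A+b_\lambda H+(a_\lambda+b_\lambda-1)A$ with $H=B-A$, and expands $b_\lambda$ and $a_\lambda+b_\lambda-1$ in powers of $\alpha^2$. It then checks by hand that the coefficient of $\alpha^{2m}$ has $\lambda$-degree at most $2m+1$, respectively at most $2m$. The second bound needs the cancellation of the top power in $(1-\lambda)^{2k+1}+\lambda^{2k+1}$. These degrees are matched against the facts that $\alpha^2$ carries at least two factors of $D$ and $H$ at least one. Finally, $\Log_y$ is expanded about $A$.

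Your factorization $G_y(U,U+D,\lambda)=\Phi(U,\lambda\,\Log_AB)$ makes the inequality $q\le b$ structural rather than computational. Every power of $\lambda$ arrives attached to a copy of $w=\Log_AB$, and $w$ vanishes identically at $D=0$. This has three benefits:
\begin{itemize}
\item it removes the sinc-coefficient bookkeeping;
\item it shows for free that every correction has $q\ge1$, because $G_y(U,U+D,0)=U$ exactly;
\item it carries over verbatim to geodesic interpolation on a general Riemannian manifold, where no sine formula exists.
\end{itemize}
You also make the off-manifold extension explicit, through the projection onto $T_{\Exp_yU}\bS^2$. The paper leaves implicit the corresponding extension of $\Log_y$ to a neighborhood in $\bR^3$. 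That extension is easily supplied by the tangent-coordinate formula $\xi\mapsto\frac{\arcsin|\xi|}{|\xi|}\xi$ from the proof of Lemma~\ref{lem:cubic_slerp}.

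The paper's route, in turn, uses only the one-variable series already employed in Lemmas~\ref{lem:cubic_slerp} and~\ref{lem:high_order_slerp}. It also exhibits the $\lambda$-dependence through two explicit scalar functions of $\alpha^2$, namely $b_\lambda$ and $a_\lambda+b_\lambda-1$. This is convenient if one wants to compute the $\mathcal H_{\ell,b,q}$ explicitly. For Proposition~\ref{prop:slerp_preserves_filter}, either argument suffices.
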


\begin{proof}
Rotate coordinates so that \(y\) is the north pole.  Write
\[
        A=\Exp_yU,\qquad B=\Exp_y(U+D),
        \qquad H=B-A .
\]
Since the exponential map is analytic in normal coordinates, \(H\) has a
Taylor expansion in \((U,D)\) in which every monomial contains at least one
factor of \(D\).  Let
\[
        \alpha=d_{\bS^2}(A,B).
\]
Because \(A=B\) when \(D=0\), the squared distance \(\alpha^2\) is analytic in
\((U,D)\) and every monomial in its Taylor expansion contains at least two
factors of \(D\).

The sine formula for SLERP gives
\[
        \SLERP(A,B,\lambda)
        =
        a_\lambda A+b_\lambda B
        =
        A+b_\lambda H+(a_\lambda+b_\lambda-1)A,
\]
where
\[
        a_\lambda=\frac{\sin((1-\lambda)\alpha)}{\sin\alpha},
        \qquad
        b_\lambda=\frac{\sin(\lambda\alpha)}{\sin\alpha}.
\]
The factor \(b_\lambda\) has the expansion
\[
        b_\lambda
        =
        \lambda+\sum_{m\ge1}\beta_m(\lambda)\alpha^{2m},
\]
where \(\beta_m\) is a polynomial in \(\lambda\) of degree at most \(2m+1\).
After multiplication by \(H\), the term
\(\beta_m(\lambda)\alpha^{2m}H\) contains at least \(2m+1\) factors of
\(D\), while its degree in \(\lambda\) is at most \(2m+1\).

Similarly,
\[
        a_\lambda+b_\lambda-1
        =
        \sum_{m\ge1}\gamma_m(\lambda)\alpha^{2m},
\]
where \(\gamma_m\) is a polynomial in \(\lambda\) of degree at most \(2m\).
Indeed, the coefficient of \(\alpha^{2m}\) is generated by the polynomial
\[
        (1-\lambda)^{2m+1}+\lambda^{2m+1}-1,
\]
together with lower even powers coming from the reciprocal of
\(\operatorname{sinc}\alpha\); the highest powers \(\lambda^{2m+1}\) cancel,
so the degree is at most \(2m\).  Multiplication by \(\alpha^{2m}A\) therefore
produces terms with at least \(2m\) factors of \(D\) and with \(\lambda\)-degree
at most \(2m\).

Thus the Euclidean coordinates of \(\SLERP(A,B,\lambda)\), expanded about
\(A\), have the form \(A+\lambda H\) plus terms for which the
\(\lambda\)-degree is no larger than the number of factors of \(D\).  Finally,
the logarithm map \(\Log_y\) is analytic in the same normal neighborhood.
Expanding \(\Log_y(A+T)\) about \(A\), where \(T\) denotes the increment just
described, preserves the same property: derivatives of \(\Log_y\) at \(A\)
depend on \(U\) but contain no \(\lambda\) and no \(D\), while products of
increments add both \(\lambda\)-degree and \(D\)-degree.  Since
\(\Log_yA=U\) and the first-order term gives \(\lambda D\), this proves
\eqref{eq:difference_filtered_slerp}.
\end{proof}

The expansion immediately gives the desired closure property.

\begin{prop}[SLERP preserves the degree filtration]
\label{prop:slerp_preserves_filter}
Let \(U\in\mathcal P_N\), let \(D\in\mathcal Q_N\), and let
\(\lambda(\xi)\) be an affine function of \(\xi\).  Then
\[
        G_y(U,U+D,\lambda(\xi))\in\mathcal P_N .
\]
If \(\lambda\) is constant, then
\[
        G_y(U,U+D,\lambda)-U\in\mathcal Q_N .
\]
\end{prop}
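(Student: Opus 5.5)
The plan is to substitute the filtered expansions of \(U\) and \(D\) into the difference-filtered SLERP expansion of Lemma~\ref{lem:difference_filtered_slerp} and then count, term by term, the power of \(h\) against the degree in \(\xi\). The whole argument is this bookkeeping. Three observations drive it.

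First, a product of filtered expansions behaves additively. If one factor has \(h^r\)-coefficient of degree \(\le r\) and another has \(h^s\)-coefficient of degree \(\le s\), then the product contributes to \(h^{r+s}\) with degree \(\le r+s\). A factor from \(\mathcal Q_N\) contributes one degree less than its \(h\)-power.

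Second, \(U\in\mathcal P_N\) has \(U_0\) constant. Since \(U\) is the logarithm of a point in an \(O(h)\) ball around \(y\), I will use that \(U_0=0\), so \(U=O(h)\); the same holds for \(D\in\mathcal Q_N\). This justifies using the lemma with \(|U|+|D|=O(h)\). Taking \(M=N\), the remainder \(O((|U|+|D|)^{N+1})\) becomes \(O(h^{N+1})\).

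Third, the coefficients of \(\mathcal H_{\ell,b,q}\) depend only on \(y\), which is fixed, and not on \(\xi\).

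Now take a monomial in \(\lambda^q\mathcal H_{\ell,b,q}(y;U,D)\). It has \(\ell-b\) factors of \(U\) and \(b\) factors of \(D\). Expand each factor in powers of \(h\), say with exponents \(r_1,\dots,r_{\ell-b}\) for the \(U\)-factors and \(s_1,\dots,s_b\) for the \(D\)-factors. The resulting \(h\)-power is \(R=\sum r_a+\sum s_c\). The \(\xi\)-degree is at most
\[
\sum r_a+\sum (s_c-1)+q \;=\; R-b+q \;\le\; R,
\]
because \(\deg\lambda(\xi)^q\le q\) for affine \(\lambda\) and \(q\le b\) by the lemma. So every contribution to the \(h^R\)-coefficient has degree at most \(R\).

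The linear part needs separate care. \(U\) lies in \(\mathcal P_N\) directly. For \(\lambda(\xi)D\), the \(h^r\)-coefficient has degree at most \(1+(r-1)=r\). Collecting coefficients order by order therefore puts \(G_y(U,U+D,\lambda(\xi))\) in \(\mathcal P_N\).

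For constant \(\lambda\), the same count applies to \(G-U=\lambda D+\sum\lambda^q\mathcal H_{\ell,b,q}\). The \(\lambda\)-factor now adds no degree, and every nonlinear monomial has \(b\ge1\). Its degree is therefore at most \(R-b\le R-1\). Its \(h^0\)-coefficient vanishes, because \(\ell\ge2\) and all factors are \(O(h)\). Together with \(\lambda D\in\mathcal Q_N\), this gives \(G-U\in\mathcal Q_N\).

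The main obstacle is the gap between formal and uniform expansions. The lemma's \(\mathcal H\) are homogeneous polynomials only if the coefficients are genuinely polynomial in \(U,D\) at fixed \(y\). Also, the truncation at \(M=N\) has to be combined with the \(O(h^{N+1})\) tails of \(U\) and \(D\) uniformly for \(\xi\) in bounded sets. I would handle this in two steps. First, note that the \(\mathcal H_{\ell,b,q}\) are multilinear, so tail perturbations of size \(O(h^{N+1})\) change each term by \(O(h^{N+1})\). Second, discard all products whose total \(h\)-power exceeds \(N\), since boundedness of \(\xi\) keeps the polynomial coefficients uniformly bounded.
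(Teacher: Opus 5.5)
Your proof is correct and follows the same route as the paper. You substitute the filtered expansions into the difference-filtered SLERP expansion, handle the affine part \(U+\lambda D\) separately, and bound each nonlinear monomial's \(\xi\)-degree by \(r-b+q\le r\) (or by \(r-b\le r-1\) when \(\lambda\) is constant); your remarks that \(U_0=0\) is needed, that the \(h^0\) coefficient of \(G_y-U\) vanishes, and how the \(O(h^{N+1})\) tails are absorbed make explicit points the paper leaves implicit.
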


\begin{proof}
The affine part is
\[
        U+\lambda D .
\]
Since \(D\in\mathcal Q_N\), the coefficient of \(h^r\) in \(D\) has degree at
most \(r-1\).  Multiplication by the affine function \(\lambda\) raises the
degree by at most one, so \(\lambda D\in\mathcal P_N\).  If \(\lambda\) is
constant, then \(\lambda D\in\mathcal Q_N\).

For a nonlinear monomial in \eqref{eq:difference_filtered_slerp}, suppose the
total power of \(h\) after substitution is \(r\), and suppose the monomial
contains \(b\) factors of \(D\).  The factors belonging to \(U\) contribute
polynomial degree no larger than their \(h\)-order, while each factor belonging
to \(D\) contributes one degree less than its \(h\)-order.  Hence, before the
factor \(\lambda^q\) is included, the polynomial degree is at most \(r-b\).
By Lemma~\ref{lem:difference_filtered_slerp}, \(q\le b\).  Therefore
multiplication by \(\lambda^q\) gives degree at most
\[
        r-b+q\le r .
\]
Thus every coefficient of \(h^r\) has degree at most \(r\), and the result
lies in \(\mathcal P_N\).  If \(\lambda\) is constant, then no degree is added
by \(\lambda^q\), and the presence of at least one \(D\)-factor makes the
increment one degree lower.  Hence the increment belongs to \(\mathcal Q_N\).
\end{proof}

\subsection{Degree-filtered structure of SIDER}

We now apply the closure result to the SIDER construction.  First consider
SIDER2.  For the stencil \(p_i,p_{i+1},p_{i+2}\), the exact data in normal
coordinates are \(X_0,X_1,X_2\in\mathcal P_N\), and any difference
\(X_j-X_\ell\) lies in \(\mathcal Q_N\).  The two extrapolated control points
are
\[
        d_a=\SLERP(p_{i+2},p_{i+1},2),
        \qquad
        d_b=\SLERP(p_i,p_{i+1},2).
\]
In normal coordinates, Proposition~\ref{prop:slerp_preserves_filter} gives
\[
        \Log_y d_a,\ \Log_y d_b\in\mathcal P_N .
\]
More is true.  Since the extrapolation parameters are constant, the affine
parts satisfy
\[
        2X_1-X_2-X_0\in\mathcal Q_N,
        \qquad
        2X_1-X_0-X_2\in\mathcal Q_N,
\]
and the nonlinear SLERP contributions also belong to \(\mathcal Q_N\).  Hence
\[
        \Log_y d_a-X_0\in\mathcal Q_N,
        \qquad
        \Log_y d_b-X_2\in\mathcal Q_N .
\]
The two inner SLERPs in SIDER2 therefore have compatible endpoints.  Their
outputs \(A_\tau\) and \(B_\tau\), with \(\tau=\xi/2\), satisfy
\[
        \Log_y A_\tau\in\mathcal P_N,\qquad
        \Log_y B_\tau\in\mathcal P_N.
\]
Furthermore,
\[
        \Log_y A_\tau-X_0\in\mathcal Q_N,
        \qquad
        \Log_y B_\tau-X_2\in\mathcal Q_N.
\]
Since \(X_2-X_0\in\mathcal Q_N\), the two endpoints of the final SIDER2 SLERP
are compatible.  A final application of
Proposition~\ref{prop:slerp_preserves_filter} shows that the SIDER2 error
belongs to \(\mathcal P_N\).

For \(k\ge3\), the SIDER recursion is
\[
P_i^{[k]}(\theta;h)
=
\SLERP\!\left(
        P_i^{[k-1]}(\theta;h),
        P_{i+1}^{[k-1]}(\theta;h),
        \frac{\xi}{k}
       \right),
        \qquad \xi=\theta-i.
\]
Assume inductively that the SIDER-\((k-1)\) normal-coordinate error has a
shift-covariant filtered expansion.  That is, for some universal formal series
\(F_{k-1}\in\mathcal P_N\),
\[
        \Log_{\gamma(\theta h)}P_i^{[k-1]}(\theta;h)
        =
        F_{k-1}(\xi,h),
\]
while
\[
        \Log_{\gamma(\theta h)}P_{i+1}^{[k-1]}(\theta;h)
        =
        F_{k-1}(\xi-1,h).
\]
By Lemma~\ref{lem:finite_difference_degree}, their difference is in
\(\mathcal Q_N\).  Proposition~\ref{prop:slerp_preserves_filter} therefore
implies
\[
        \Log_{\gamma(\theta h)}P_i^{[k]}(\theta;h)
        =
        F_k(\xi,h)
\]
for some \(F_k\in\mathcal P_N\), again independent of the stencil shift except
through \(\xi\).  This proves the following proposition.

\begin{prop}[Degree-filtered SIDER expansion]
\label{prop:sider_degree_filtered}
For every fixed SIDER level \(n\) and every fixed expansion order \(N\), the
normal-coordinate error has a shift-covariant filtered expansion
\[
        \Log_{\gamma(\theta h)}P_i^{[n]}(\theta;h)
        =
        \sum_{r=0}^{N}h^r
        \mathcal E_{n,r}
        \bigl(\xi;\mathcal J_r(\gamma;\theta)\bigr)
        +
        O(h^{N+1}),
        \qquad \xi=\theta-i,
\]
where each coefficient is a polynomial in \(\xi\) of degree at most \(r\), and
the coefficient maps are universal, depending on the stencil shift \(i\) only
through \(\xi\).
\end{prop}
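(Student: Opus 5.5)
The proof is an induction on the SIDER level \(n\). It needs two things at each level: membership of the normal-coordinate error in \(\mathcal P_N\), and \emph{shift covariance}. Shift covariance means the error of the stencil starting at \(i\) is a universal series \(F_n(\xi,h)\) evaluated at \(\xi=\theta-i\). The series depends on \(\gamma\) only through the jets \(\mathcal J_r(\gamma;\theta)=(X^{(1)}(0),\dots,X^{(r)}(0))\) at the base point \(y=\gamma(\theta h)\).

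The first step is to fix the universal form of the data. By the normal-coordinate Taylor expansion,
\[
X_j(\xi,h)=\Log_{\gamma(\theta h)}\gamma((i+j)h)=\sum_{r=1}^{N}\frac{h^r}{r!}(j-\xi)^rX^{(r)}(0)+O(h^{N+1}).
\]
For each fixed local index \(j\), this is a universal polynomial series in \((\xi,\mathcal J_r)\), independent of \(i\). The only dependence on \(i\) is through \(\xi\). Because every later operation takes these local data as input, universality propagates automatically. The one point to check is that the SLERP expansion coefficients \(\mathcal H_{\ell,b,q}(y;U,D)\) of Lemma~\ref{lem:difference_filtered_slerp} do not introduce any \(i\)-dependence. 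They depend on \(y\) only through the fixed isometry rotating \(y\) to the north pole, so after this rotation they are universal multilinear expressions in \((U,D)\). This is an inner-product-invariant statement, since \(\mathcal C_\lambda\) and the higher terms are built from \(|\cdot|\), \(\langle\cdot,\cdot\rangle\) and vector multiples, so it is coordinate free.

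The base case \(n=2\) follows the chain already written in the text. First, \(X_0,X_1,X_2\in\mathcal P_N\) and their pairwise differences lie in \(\mathcal Q_N\) by Lemma~\ref{lem:finite_difference_degree}. Next, the control points come from constant-\(\lambda\) SLERPs, and I will write them with base endpoint \(X_{2}\) (respectively \(X_0\)) and difference \(X_1-X_2\) (respectively \(X_1-X_0\)). Proposition~\ref{prop:slerp_preserves_filter} then gives \(\Log_y d_a - X_2\in\mathcal Q_N\), and hence \(\Log_y d_a-X_0\in\mathcal Q_N\) because \(X_2-X_0\in\mathcal Q_N\); the case of \(d_b\) is symmetric. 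Then the inner SLERPs with \(\lambda=\xi/2\) affine in \(\xi\) give outputs in \(\mathcal P_N\). For the final SLERP, I need the endpoint difference \(\Log_yB_\tau-\Log_yA_\tau\) to lie in \(\mathcal Q_N\) even though \(\lambda\) is not constant. This is the delicate point. I will expand each inner output as \(X_0+\lambda D_a+\text{nonlinear}\) and \(X_2+\lambda D_b+\dots\) via \eqref{eq:difference_filtered_slerp}. The difference is then \((X_2-X_0)+\tfrac{\xi}{2}\bigl[(D_b-D_a)\bigr]+\dots\). Its \(\lambda\)-linear part has coefficient \(D_b-D_a=(2X_1-X_0-X_2)-(2X_1-X_2-X_0)\)-type combinations. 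I will show that this coefficient is a second difference, hence two degrees lower (it lies in \(\mathcal Q_N\) shifted by one). The product with \(\xi\) is therefore still in \(\mathcal Q_N\), and the nonlinear monomials are treated the same way using \(q\le b\) together with the extra degree loss from differences. Every operation here is universal in the local data, so the result is \(F_2(\xi,h)\).

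For the inductive step \(k\ge3\), the hypothesis gives \(P_i^{[k-1]}\leftrightarrow F_{k-1}(\xi,h)\) and \(P_{i+1}^{[k-1]}\leftrightarrow F_{k-1}(\xi-1,h)\). The first identity is the hypothesis itself. The second uses the fact that the shifted stencil has local coordinate \(\theta-(i+1)=\xi-1\) and that the jets at \(y\) are the same. Lemma~\ref{lem:finite_difference_degree} places the difference in \(\mathcal Q_N\). Proposition~\ref{prop:slerp_preserves_filter} with \(\lambda=\xi/k\) then gives a result in \(\mathcal P_N\), namely \(F_k(\xi,h):=G_y(F_{k-1}(\xi,h),F_{k-1}(\xi-1,h),\xi/k)\), which is visibly universal. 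The \(O(h^{N+1})\) remainders are uniform for \(\xi\) bounded by Assumption~\ref{assump:local_stencil} and Lemma~\ref{lem:high_order_slerp}, so composing finitely many of them preserves the order.

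I expect the main obstacle to be the compatibility of the final SIDER2 SLERP endpoints. That is the only place where a non-constant \(\lambda\) acts on endpoints that are themselves outputs of non-constant-\(\lambda\) SLERPs, and Proposition~\ref{prop:slerp_preserves_filter} as stated does not directly give the \(\mathcal Q_N\) property there. If the second-difference argument becomes too messy, I will instead use the symmetry of SIDER2 under reversal, \(\xi\mapsto 2-\xi\), to pair the terms. As a fallback, I will verify the degree bound directly: \(F_2\) vanishes at \(\xi=0,1,2\) by interpolation, which together with \(\mathcal P_N\) membership of the intermediate quantities constrains the coefficients.
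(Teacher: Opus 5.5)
Your overall route is the paper's: the five-SLERP SIDER2 chain handled by Proposition~\ref{prop:slerp_preserves_filter}, then induction for \(k\ge3\) using \(F_{k-1}(\xi-1,h)-F_{k-1}(\xi,h)\in\mathcal Q_N\) from Lemma~\ref{lem:finite_difference_degree}. Your inductive step is exactly the paper's. You have also correctly found the weak point. The final SIDER2 SLERP needs \(\Log_yB_\tau-\Log_yA_\tau\in\mathcal Q_N\), but Proposition~\ref{prop:slerp_preserves_filter} only gives increments in \(\mathcal Q_N\) for constant \(\lambda\), and here \(\tau=\xi/2\). The paper simply asserts \(\Log_yA_\tau-X_0,\ \Log_yB_\tau-X_2\in\mathcal Q_N\) at this point. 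Your repair, however, only covers the affine parts and does not close as sketched.

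\(B_\tau\) is based at \(d_b\), so \(\Log_yB_\tau=X_2+(1-\tau)(\delta_b-X_2)+\cdots\). The \(\tau\)-coefficient of the difference is therefore \(-[(\delta_b-X_2)+(\delta_a-X_0)]\); your \(D_b-D_a\), read literally, is zero.

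\begin{itemize}
\item The affine part of this coefficient, \(-2(2X_1-X_0-X_2)\), is indeed a second difference. But the coefficient also contains the nonlinear parts of the two \(\lambda=2\) extrapolations. Lemma~\ref{lem:difference_filtered_slerp} as stated allows monomials with a single \(D\)-factor. Such a term in \(\delta_a-X_0\) has an \(h^r\)-coefficient of degree \(r-1\), which becomes degree \(r\) after multiplication by \(\tau\).
\item The inner SLERPs' own nonlinear terms fail the same way. The case \(q=b\) does occur, so if \(\delta_a-X_0\) is only known to lie in \(\mathcal Q_N\), you get degree \(r-b+q=r\).
\end{itemize}

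The missing fact is that single-\(D\) nonlinear terms vanish. Since \(\SLERP(A,B,\lambda)=\Exp_A(\lambda\Log_AB)\), \((d\Exp_A)_0\) and \((d\Log_A)_A\) are the identity, and \((d\Log_y)_A\) inverts \((d\Exp_y)_U\), the \(D\)-linearization of \(G_y(U,U+D,\lambda)\) is exactly \(\lambda D\). Hence
\[
G_y(U,U+D,\lambda)=U+\lambda D+\bigl(\text{terms with at least two factors of }D\bigr).
\]
At cubic order you can check this directly: Lemma~\ref{lem:cubic_slerp} gives \(\mathcal C_\lambda(U,U+D)=\tfrac{\lambda(1-\lambda)}{3}\bigl(|D|^2U-\langle U,D\rangle D\bigr)\), which also shows that \(q=b\) is attained.

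With \(b\ge2\) the argument closes:
\begin{itemize}
\item The extrapolation nonlinearities have degree at most \(r-b\le r-2\). So \(\delta_a-X_0\) and \(\delta_b-X_2\), nonlinear parts included, have \(h^r\)-coefficients of degree at most \(r-2\).
\item Then \(\tau(\delta_a-X_0)\) and \((1-\tau)(\delta_b-X_2)\) have degree at most \(r-1\).
\item The inner nonlinear monomials have degree at most \(r-2b+q\le r-2\).
\end{itemize}
So \(\Log_yB_\tau-\Log_yA_\tau\in\mathcal Q_N\), and your final step and induction go through.

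Your fallback via vanishing at \(\xi=0,1,2\) cannot replace this. The degree bound is exactly the input that the node-vanishing argument of Proposition~\ref{prop:degree_filter_criterion} requires, so using it here would be circular.
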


\subsection{All-order local consistency}

It remains to combine the degree-filtered expansion with the interpolation
property.  The next proposition is the polynomial argument that converts the
degree bound into accuracy.

\begin{prop}[Degree-filtered criterion for local accuracy]
\label{prop:degree_filter_criterion}
Assume that the SIDER-\(n\) error \(E_i^{[n]}\) admits the filtered expansion
of Proposition~\ref{prop:sider_degree_filtered} through order \(N\ge n+1\).
Assume also that the SIDER-\(n\) interpolant satisfies
\[
        P_i^{[n]}((i+m)h;h)=p_{i+m},
        \qquad m=0,1,\ldots,n.
\]
Then
\[
        E_i^{[n]}(\theta;h)=O(h^{n+1})
\]
uniformly for \(\xi\) in bounded intervals.  Moreover, the leading coefficient
has the form
\begin{equation}
\label{eq:leading_degree_filter}
        \mathcal E_{n,n+1}
        \bigl(\xi;\mathcal J_{n+1}(\gamma;\theta)\bigr)
        =
        A_n(\theta)
        \prod_{m=0}^{n}(\xi-m),
\end{equation}
for some vector coefficient
\(A_n(\theta)\in T_{\gamma(\theta h)}\bS^2\) depending on the local jet of
\(\gamma\), but not on the stencil shift \(i\).
\end{prop}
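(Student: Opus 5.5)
The argument is pure polynomial bookkeeping once Proposition~\ref{prop:sider_degree_filtered} is available. Fix $\theta$ in a bounded interval, set $y=\gamma(\theta h)$, and write
\[
E_i^{[n]}(\theta;h)=\Log_y P_i^{[n]}(\theta;h)=\sum_{r=0}^{N}h^r\mathcal E_{n,r}(\xi)+O(h^{N+1}),\qquad \deg\mathcal E_{n,r}\le r.
\]
The coefficients $\mathcal E_{n,r}$ are universal functions of $\xi$ and the local jet at $\theta$. The aim is to show that every coefficient with $r\le n$ vanishes at the $n+1$ points $\xi=0,1,\dots,n$. A polynomial of degree $\le r\le n$ with $n+1$ zeros is identically zero, so this kills all terms through order $h^n$. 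For $r=n+1$ the same $n+1$ zeros force $\mathcal E_{n,n+1}$ to be a scalar multiple of $\prod_{m=0}^{n}(\xi-m)$ in each component. That multiple is the vector $A_n(\theta)$, and it is independent of $i$ because the coefficient maps are universal.

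The delicate point is turning the nodal interpolation property into vanishing of the coefficients at the nodes. The expansion is taken in normal coordinates centred at $y=\gamma(\theta h)$, so the jet $\mathcal J_r(\gamma;\theta)$ itself moves with $\theta$. I would therefore not specialise $\theta$ directly. Instead I would freeze the base point and the jet and regard the expansion as a formal identity in the independent variable $\xi$.

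Concretely, I would go back to the construction in Proposition~\ref{prop:sider_degree_filtered}. There the data are $X_j(\xi,h)=\sum_r \frac{h^r}{r!}(j-\xi)^rX^{(r)}(0)$, with the Taylor vectors $X^{(r)}(0)$ treated as fixed parameters, and the SIDER map is applied to these data. The result is a formal series whose coefficients are polynomials in $\xi$ and in the parameters. This is exactly the object whose degree was bounded.

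Next I would verify the interpolation identity at this formal level. The SLERP compositions defining $P^{[n]}$ at local parameter $\xi=m$ return the datum $p_{i+m}$ exactly. This is the algebraic identity behind $P_i^{[n]}((i+m)h;h)=p_{i+m}$, and it holds for arbitrary data in the convex ball, not only for samples of $\gamma$. Hence, with the parameters frozen, the series evaluated at $\xi=m$ equals $X_m(m,h)=0$ for every small $h$, where the equality is the Taylor expansion of $\Log_y$ of that datum. Matching powers of $h$ then gives $\mathcal E_{n,r}(m)=0$ for all $r\le N$ and $m=0,\dots,n$.

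With the zeros in hand, the root-counting step is routine. Reading off the coefficients of $h^r$ for $r\le n$ gives $\mathcal E_{n,r}\equiv0$. The $h^{n+1}$ coefficient then has the form \eqref{eq:leading_degree_filter}. The remainder after order $n+1$ is $O(h^{n+2})$ uniformly, so $E_i^{[n]}=O(h^{n+1})$.

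The step I expect to be the main obstacle is this justification of $\mathcal E_{n,r}(m)=0$. The naive route fails: substituting $\theta=i+m$ changes $y$ and hence the jet, so the nodal identity holds only on a set of measure zero in $\theta$. What must be shown is that the dependence on $\xi$ is genuinely polynomial and separate from the jet. I would argue this by treating the jet as fixed and using the exact interpolation property of each SLERP-built node value for arbitrary input data. These are $d_a$ and $d_b$ in SIDER2 and the endpoint selection at $\lambda=0$ or $1$ in the recursion. Nodes inside the stencil are handled by induction on $k$: for $k\ge3$ the recursion returns whichever of its two inputs already interpolates that node.
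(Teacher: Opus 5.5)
Your proposal is correct and follows the same route as the paper's proof. The interpolation property gives the zeros \(\xi=0,1,\dots,n\) of each coefficient \(\mathcal E_{n,r}\). The degree bound \(\deg\mathcal E_{n,r}\le r\le n\) then forces these coefficients to vanish. The \(h^{n+1}\) coefficient is a vector multiple of \(\prod_{m=0}^{n}(\xi-m)\), and it is independent of \(i\) because the coefficient maps are universal.

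The one difference is in how the zeros are justified. The paper simply asserts that the nodal identity yields \(\mathcal E_{n,r}(m;\mathcal J_r)=0\) ``for every admissible jet.'' Your frozen-jet argument supplies the justification the paper leaves implicit. You apply the SIDER map to the Taylor-polynomial data \(X_j(\xi,h)\) and use the exact interpolation identity for arbitrary data. This makes the series vanish at \(\xi=m\) for all small \(h\) with the jet held fixed, which avoids the moving base point \(\gamma(\theta h)\) that you correctly flagged.
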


\begin{proof}
At the interpolation node \(\xi=m\), the SIDER interpolant and the exact curve
coincide.  Therefore
\[
        E_i^{[n]}=0
\]
at \(\xi=0,1,\ldots,n\).  Since the coefficients
\(\mathcal E_{n,r}\) are universal polynomial functions of the normal-coordinate
jet, this identity implies
\[
        \mathcal E_{n,r}(m;\mathcal J_r)=0,
        \qquad m=0,1,\ldots,n,
\]
for every admissible jet.  Equivalently, for each fixed jet,
\(\mathcal E_{n,r}(\xi;\mathcal J_r)\) has the \(n+1\) distinct roots
\(0,1,\ldots,n\).  For \(r\le n\), this coefficient is a polynomial of degree
at most \(r\le n\); hence it must vanish identically.  Thus all coefficients
through order \(h^n\) are zero, and
\[
        E_i^{[n]}(\theta;h)=O(h^{n+1}).
\]

For \(r=n+1\), the coefficient is a polynomial of degree at most \(n+1\) with
the roots \(0,1,\ldots,n\).  Hence it must be a multiple of the nodal
polynomial:
\[
        \mathcal E_{n,n+1}(\xi)
        =
        A_n(\theta)\prod_{m=0}^{n}(\xi-m).
\]
The shift-covariance in Proposition~\ref{prop:sider_degree_filtered} implies
that the multiplier depends only on the local jet at the exact point and not
on the stencil shift \(i\).  This proves \eqref{eq:leading_degree_filter}.
\end{proof}

Combining the two propositions gives the all-order consistency theorem.

\begin{thm}[All-order local consistency of SIDER]
\label{thm:all_order_sider}
Let \(n\ge2\) be fixed.  Suppose that
\(\gamma\in C^{n+1}\) on the relevant parameter interval and that all data
points and all intermediate points generated by the SIDER-\(n\) construction
remain in a common geodesically convex ball of radius \(O(h)\).  Then
\[
        d_{\bS^2}\!\left(
        \gamma(\theta h),P_i^{[n]}(\theta;h)
        \right)
        \le C_n h^{n+1},
        \qquad i\le\theta\le i+n,
\]
where \(C_n\) is independent of \(h\).  In addition, the leading
\(h^{n+1}\)-coefficient has the shifted nodal form
\[
        A_n(\theta)\prod_{j=i}^{i+n}(\theta-j).
\]
\end{thm}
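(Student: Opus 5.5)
The plan is to combine Proposition~\ref{prop:sider_degree_filtered} with Proposition~\ref{prop:degree_filter_criterion}, and then convert the resulting normal-coordinate estimate into a geodesic distance bound. Fix \(n\ge2\), a shift \(i\), and \(\theta\in[i,i+n]\). Set \(y=\gamma(\theta h)\) and \(\xi=\theta-i\in[0,n]\).

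The first step is to check that the expansion of Proposition~\ref{prop:sider_degree_filtered} is available through order \(N=n+1\) with a uniform remainder. This requires the normal-coordinate Taylor expansion of the samples \(X_j\) through order \(h^{n+1}\), which is exactly what \(\gamma\in C^{n+1}\) supplies, with remainder \(O(h^{n+1})\). It also requires the finite-order versions of Lemma~\ref{lem:difference_filtered_slerp} (with \(M=n+1\)) and Proposition~\ref{prop:slerp_preserves_filter}, applied at each of the finitely many SLERP operations of SIDER-\(n\). There are five for SIDER2 and one more per recursive level. At each operation the endpoints are \(O(h)\) by Assumption~\ref{assump:local_stencil}, so the SLERP remainders are \(O(h^{n+2})\). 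The parameters \(\lambda=\xi/2\), \(\xi/k\), or the constant \(2\) are all bounded. Every constant involved is uniform on the compact set of base points and for \(\xi\in[0,n]\). Since the number of operations depends only on \(n\), the remainders accumulate to \(O(h^{n+1})\), uniformly.

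The second step is to verify the interpolation hypothesis of Proposition~\ref{prop:degree_filter_criterion}, namely \(P_i^{[n]}((i+m)h;h)=p_{i+m}\) for \(m=0,\dots,n\). I would prove this by induction on \(n\). For SIDER2, the background section gives \(P_{012}(0)=p_0\), \(P_{012}(h)=p_1\), and \(P_{012}(2h)=p_2\). The \(m=1\) case needs a short check: with \(\tau=1/2\), the two inner SLERPs are symmetric geodesic midpoints, and the outer midpoint returns \(p_1\) by construction of \(d_a,d_b\). I would cite this from \cite{fonleu23} rather than redo it. For the recursive step \(k\ge3\) there are three cases. At \(\xi=0\) the weight is \(0\), so the SLERP returns \(P_i^{[k-1]}=p_i\). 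At \(\xi=k\) the weight is \(1\), so it returns \(P_{i+1}^{[k-1]}=p_{i+k}\). At an interior node \(\xi=m\) with \(1\le m\le k-1\), both candidates equal \(p_{i+m}\) by the induction hypothesis, and SLERP of a point with itself is that point for any \(\lambda\).

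With both hypotheses in place, Proposition~\ref{prop:degree_filter_criterion} gives \(\Log_y P_i^{[n]}(\theta;h)=O(h^{n+1})\) uniformly for \(\xi\in[0,n]\). It also gives a leading coefficient of the form \(A_n(\theta)\prod_{m=0}^n(\xi-m)=A_n(\theta)\prod_{j=i}^{i+n}(\theta-j)\), with \(A_n\) independent of \(i\). Because \(P_i^{[n]}\) lies in the common convex ball, we have \(d_{\bS^2}(y,P_i^{[n]})=|\Log_y P_i^{[n]}|\), which yields the bound with a constant \(C_n\) independent of \(h\).

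I expect the main obstacle to be the first step. Its difficulty is not algebraic but lies in justifying that the formal filtered expansion holds with honest uniform remainders under only \(C^{n+1}\) regularity. Lemma~\ref{lem:difference_filtered_slerp} is phrased via analyticity of SLERP in \((U,D)\), which is harmless since SLERP, \(\Exp\), and \(\Log\) are analytic. The real issue is that the data jets are only finite, so the degree filtration must be tracked for truncated expansions. I would handle this by working throughout in the class \(\mathcal P_{n+1}\) with explicit \(O(h^{n+2})\)-type remainders at each SLERP. Because degree filtration is only claimed through order \(n+1\), the remainder may be weakened to \(O(h^{n+1})\) when fewer derivatives are available. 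I would check that the jet-independence argument in Proposition~\ref{prop:degree_filter_criterion} still applies in this setting: the vanishing at nodes holds for each fixed \(\gamma\), and a polynomial of degree \(\le r\le n\) with \(n+1\) roots vanishes identically. That argument needs only finitely many coefficients.
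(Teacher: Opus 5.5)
Your proposal is correct and takes the same route as the paper's proof: apply Proposition~\ref{prop:sider_degree_filtered} with \(N=n+1\), then Proposition~\ref{prop:degree_filter_criterion}, then use \(d_{\bS^2}(y,z)=|\Log_y z|\); your induction verifying the nodal interpolation hypothesis and your uniformity bookkeeping fill in details the paper leaves implicit. One small point: under only \(\gamma\in C^{n+1}\), the \(h^{n+1}\) coefficient is defined only with a Peano-type \(o(h^{n+1})\) remainder, since the \(O(h^{n+1})\) remainder you mention yields just the bound and not the nodal form of the leading coefficient; the paper's own invocation of \(N=n+1\) glosses over the same issue.
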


\begin{proof}
Apply Proposition~\ref{prop:sider_degree_filtered} with \(N=n+1\), and then
apply Proposition~\ref{prop:degree_filter_criterion}.  In the common normal
neighborhood,
\[
        d_{\bS^2}(y,z)=|\Log_y z|,
\]
so the normal-coordinate estimate gives the stated geodesic-distance estimate.
\end{proof}

Theorem~\ref{thm:all_order_sider} supplies the coefficient compatibility
required in Section~\ref{sec:higher_order_recurrence}.  Indeed, the leading
coefficient \(A_n(\theta)\) in the nodal form is independent of the stencil
shift.  Thus the degree-filtered argument provides the missing structural
reason why the Neville-type cancellation persists beyond the explicitly
computed SIDER3 and SIDER4 cases.

\section{Discussion and conclusion}

This paper provides a local consistency analysis of SIDER interpolation on the
unit sphere \(\bS^2\).  The main idea is to separate the geometric part of the
construction from the algebraic cancellation mechanism inherited from
Neville interpolation.  The geometry enters through the normal-coordinate
expansion of SLERP.  Although SLERP is affine to leading order, it contains
curvature-dependent higher-order corrections, beginning at cubic order.  These
terms must be controlled carefully, since they occur at the same order as the
leading error of the quadratic SIDER2 reconstruction.

For SIDER2, we showed that the cubic geometric corrections from the individual
SLERP operations do not alter the leading interpolation error.  The resulting
error has the same shifted nodal structure as ordinary quadratic interpolation
in normal coordinates, giving third-order local accuracy.  This refined
leading-error form is essential for the SIDER3 analysis.  The two adjacent
SIDER2 reconstructions entering SIDER3 have a common leading cubic coefficient,
and the SIDER3 weight cancels this term exactly.  Hence SIDER3 achieves
fourth-order local accuracy.  By carrying the expansion one order further, we
also verified the corresponding coefficient compatibility at the next level and
obtained fifth-order local accuracy for SIDER4.

The higher-order analysis shows that these cancellations are not accidental.
Using a degree-filtered formal expansion in the normalized stencil variable, we
proved that the SIDER recursion preserves the polynomial degree structure of
the local error coefficients.  Since SIDER-\(n\) interpolates \(n+1\) nodes,
the first \(n\) coefficient functions in the expansion vanish identically.
This yields the all-order local consistency estimate
$        d_{\bS^2}\bigl(\gamma(\theta h),P_i^{[n]}(\theta;h)\bigr)
        =
        O(h^{n+1})$
for every fixed \(n\), under the stated smoothness and small-stencil
assumptions.  The proof also explains why the leading coefficient is
shift-invariant across adjacent stencils, which is the compatibility condition
needed for the Neville-type recurrence to increase the order.

The results are local.  We assume that the sampled points, extrapolated control
points, and intermediate SIDER values remain in a common geodesically convex
neighborhood.  This ensures that the logarithm map is single-valued, the
relevant geodesics are unique, and the Taylor expansions are uniform.  The
analysis also assumes equally spaced parameter values; nonuniform spacing
should require the corresponding nonuniform Neville weights and modified nodal
factors.  Finally, the present work concerns the smooth SIDER building blocks.
The full SENO method adds a nonlinear stencil-selection procedure, and a
complete stability or non-oscillation theory for SENO would require additional
arguments.

In summary, the paper establishes a rigorous local explanation for the observed
high-order accuracy of SIDER interpolation on \(\bS^2\).  The analysis confirms
third-, fourth-, and fifth-order accuracy for SIDER2, SIDER3, and SIDER4,
respectively, and proves the corresponding \(O(h^{n+1})\) consistency result
for fixed SIDER-\(n\).  Possible extensions include nonuniform grids,
interpolation on more general Riemannian manifolds or Lie groups, and a
separate analysis of the SENO stencil-selection mechanism.

\section*{Computational Methodology and Disclosure}

This research utilized GPT-5.5 (Thinking) to perform primary computational modeling and mathematical derivation. The author acted as the principal investigator, defining the research parameters and theoretical scope. The author notes that while the computational workflow was executed via AI, the author maintains full responsibility for the study's conclusions. The author acknowledges that the mathematical depth of the generated results exceeds current manual verification capabilities and presents these findings as AI-assisted hypotheses subject to future formal peer verification. Consequently, this article is intended solely for dissemination as a preprint on arXiv and is not submitted to peer-reviewed journals in its current form.

\bibliographystyle{plain}
\bibliography{syleung_final}

\end{document}